\newtheorem{thm}{Theorem}[section]
\newtheorem{lem}[thm]{Lemma}
\newtheorem{cor}[thm]{Corollary}
\newtheorem{prop}[thm]{Proposition}  
\theoremstyle{remark}
\newtheorem{nota}{Notation and terminology\!}    
\theoremstyle{definition}
\newtheorem{defn}[thm]{Definition}
\newtheorem{rem}[thm]{Remark}
\newtheorem{conj}[thm]{Conjecture}
\newtheorem{q}[thm]{Question} 
\newtheorem{def/prop}[thm]{Definition/Proposition}
\numberwithin{equation}{section}
\newcommand{\red}{}   \newcommand{\violet}{}  \newcommand{\magenta}{} \newcommand{\cyan}{} \newcommand{\fgreen}{} \newcommand{\orange}{}
\newcommand{\rs}[1]{\textcolor{red}{\sout{#1}}}
\def\det{\mathop{\mathrm{det}}\nolimits}
\def\Im{\mathop{\mathrm{Im}}\nolimits}
\def\Ker{\mathop{\mathrm{Ker}}\nolimits}
\def\Hom{\mathop{\mathrm{Hom}}\nolimits}
\def\Gal{\mathop{\mathrm{Gal}}\nolimits}
\def\Spec{\mathop{\mathrm{Spec}}\nolimits}
\def\mod{\mathop{\mathrm{mod}}\nolimits}
\def\Cl{\mathop{\mathrm{Cl}}\nolimits}
\def\Nr{\mathop{\mathrm{Nr}}\nolimits}
\newcommand{\mf}[1]{{\mathfrak{#1}}}
\newcommand{\bb}[1]{{\mathbb{#1}}}
\newcommand{\mca}[1]{{\mathcal{#1}}}
\newcommand{\To}{\longrightarrow}
\newcommand{\LR}{\longleftrightarrow}
\newcommand{\inj}{\hookrightarrow}
\newcommand{\surj}{\twoheadrightarrow}
\newcommand{\congto}{\overset{\cong}{\to}}
\newcommand{\imp}{\Longrightarrow}
\newcommand{\iffu}{\underset{\rm iff}{\iff}}
\newcommand{\N}{\bb{N}}
\newcommand{\Z}{\bb{Z}}
\newcommand{\Q}{\bb{Q}}
\newcommand{\R}{\bb{R}}
\newcommand{\C}{\bb{C}}
\newcommand{\F}{\bb{F}}
\newcommand{\p}{\mf{p}}
\renewcommand{\P}{\mf{P}}
\renewcommand{\(}{\left\{}
\renewcommand{\)}{\right\}}
\newcommand{\del}{\partial}
\newcommand{\ol}{\overline}
\newcommand{\ul}{\underline}
\newcommand{\ds}{\displaystyle}
\newcommand{\wt}[1]{{\widetilde{#1}}}
\newcommand{\wh}[1]{{\widehat{#1}}}
\DeclareMathOperator*{\restprod}%
 {\mathchoice{\ooalign{\ensuremath{\displaystyle\prod}\crcr\ensuremath{\displaystyle\coprod}}}%
             {\ooalign{\ensuremath{\textstyle\prod}\crcr\ensuremath{\textstyle\coprod}}}%
             {\ooalign{\ensuremath{\scriptstyle\prod}\crcr\ensuremath{\scriptstyle\coprod}}}%
             {\ooalign{\ensuremath{\scriptscriptstyle\prod}\crcr\ensuremath{\scriptscriptstyle\coprod}}}%
 }
\begin{document}



\title[Id\`elic 
 for 3-manifolds and very admissible links]{Id\`{e}lic class field theory for\\ 3-manifolds and very admissible links}

\author{Hirofumi Niibo and Jun Ueki}





\maketitle

\begin{abstract} 
We study a topological analogue of id\`elic class field theory for 3-manifolds, in the spirit of arithmetic topology. 
We firstly introduce the notion of a very admissible link $\mca{K}$ in a 3-manifold $M$, which plays a role analogous to the set of primes of a number field. 
For such a pair $(M,\mca{K})$, we introduce the notion of id\`eles and define the id\`ele class group. 
Then, getting \red{the} local class field theory for each knot in $\mca{K}$ together, 
we establish analogues of the global reciprocity law and the existence theorem of id\`elic class field theory. 
\end{abstract}

\footnote[0]{keywords: id\`ele, class field theory, 3-manifold, branched covering, arithmetic topology.}
\footnote[0]{Mathematics Subject Classification 2010: \fgreen{Primary 57M12, 11R37, Secondary 57M99}} 

\section{Introduction} 
\violet{In this paper, following analogies between 3-dimensional topology and number theory, we study a topological analogue of id\`elic class field theory for 3-manifolds. 
We establish topological analogues of Artin's global reciprocity law and the existence theorem \fgreen{of} 
 id\`elic class field theory, for any closed, oriented, connected 3-manifold.} 


Let $M$ be a closed, oriented, connected 3-manifold, equipped with a \emph{very admissible link} (knot set) $\mca{K}$, the notion introduced in \cite{Niibo1} as an analogue of the set of primes in a number ring. 
For this notion, we give a refined treatment in Section 2. 

For $(M,\mca{K})$, we also introduce the notion of a \emph{universal $\mca{K}$-branched cover} in Section 3, \red{which may be regarded as} an analogue of an algebraic closure of a number field. 
We fix it and restrict our argument to the branched covers which are obtained as its quotients. (It is equivalent to consider isomorphism classes of branched covers with base points.) 

\fgreen{We then introduce the \emph{id\`ele group} $I_{M,\mca{K}}$, the  \emph{principal id\`ele group} $P_{M,\mca{K}}$, and the \emph{id\`ele class group} $C_{M,\mca{K}}:=I_{M,\mca{K}}/P_{M,\mca{K}}$ in a functorial way.  
We define $I_{M,\mca{K}}$ as the restricted product of $H_1(\del V_K)$'s where $K$ runs through all the components of $\mca{K}$, following \cite{Niibo1}. 
Let $\Gal(X_L^{\rm ab}/X_L)$ denote the Galois group of the maximal abelian cover over the exterior $X_L=M-L$ of each finite sublink $L\subset \mca{K}$, and put $\Gal(M,\mca{K})^{\rm ab}:=\varprojlim_{L\subset \mca{K}}\Gal(X_L^{\rm ab}/X_L)$. Then there is a natural homomorphism $\wt{\rho}_{M,\mca{K}}:I_{M,\mca{K}}\to \Gal(M,\mca{K})^{\rm ab}$. 
In \cite{Niibo1}, $P_{M, \mca{K}}$ was defined as $\Ker \wt{\rho}_{M,\mca{K}}$. In this paper, we define it by the image of the natural homomorphism $\Delta:H_2(M,\mca{K})\to I_{M,\mca{K}}$ and prove that it coincides with that of \cite{Niibo1} 
(Theorem \ref{Pidele}). 
}

The first main result of 
\orange{our id\`elic class field theory} 
is the following analogue of Artin's global reciprocity law, 
which was \red{originally} proved 
in \cite{Niibo1} for the case over an integral homology 3-sphere. 
\\[3mm]
\noindent 
\textbf{Theorem \ref{global CFT for mfd}} (The global reciprocity law for 3-manifolds)\textbf{.} 
\emph{There is a canonical isomorphism called 
the global reciprocity map 
\[ \rho_{M,\mca{K}}: C_{M,\mca{K}}\congto \Gal(M,\mca{K})^{\rm ab}\]
such that} (i) \emph{for any finite abelian cover  $h:N \to M$ branched over a finite link in $\mca{K}$, $\rho_{M,\mca{K}}$ induces an isomorphism of finite abelian groups 
\[ C_{M,\mca{K}}/h_{*}(C_{N,h^{-1}(\mca{K})})\congto \Gal(h),\]
and}
(ii) \emph{$\rho_{M,\mca{K}}$ is compatible with the local theories.
}\\[-2mm]

In this paper, we also show an analogue of the existence theorem of id\`elic  class field theory, which gives a bijective correspondence between finite abelian covers of $M$ branched over finite links in $\mca{K}$ and certain subgroups of $C_{M,\mca{K}}$. 
For this purpose, we introduce certain topologies on $C_{M,\mca{K}}$, called the \emph{standard topology} and the \emph{norm topology}, following after the case of number fields (\cite{KKS2}, \cite{Neukirch}). 
Now our existence theorem is stated as follows:
\\[3mm]
\noindent 
\textbf{Theorem \ref{main thm}} (The existence theorem)\textbf{.} 
\emph{
The following correspondence 
\[(h:N\to M) \mapsto h_*(C_{N,h^{-1}(\mca{K})})\] 
gives a bijection between the set of (isomorphism classes of) finite abelian covers of $M$ branched over finite links $L$ in $\mca{K}$ and the set of open subgroups of finite indices of $C_{M,\mca{K}}$ with respect to the standard topology. 
Moreover, the latter set coincides with the set of open subgroups of $C_{M,\mca{K}}$ with respect to the norm topology.} \\[-2mm] 

Here are contents of this paper. 
In Section 2, we discuss very admissible links. We refine the definition in \cite{Niibo1}, prove the existence, and give some remarks. 
In Section 3, we briefly review the basic analogy between knots and primes, introduce the notion of a universal $\mca{K}$-branched cover, and discuss the role of base points.
In Section 4, we recall the id\`elic class field theory for number fields, whose analogue will be discussed in the later sections.  
In Section 5, 
 we introduce 
 \red{the} id\`ele group $I_{M,\mca{K}}$, 
\fgreen{the natural homomorphism $\Delta:H_2(M,\mca{K})\to I_{M,\mca{K}}$, the principal id\`ele group $\mca{P}_{M,\mca{K}}$,} 
and \red{the} id\`ele class group $C_{M,\mca{K}}$, together with the global reciprocity map $\rho_{M,\mca{K}}:C_{M,\mca{K}}\to \Gal(M,\mca{K})^{\rm ab}$ for a 3-manifold $M$ equipped with a \fgreen{(}very admissible\fgreen{)} link $\mca{K}$. 
Then we verify the global reciprocity law for 3-manifolds. 
In Section 6, we introduce the standard topology on the id\`ele class group $C_{M,\mca{K}}$, and prove the existence theorem for it.
In Section 7, we do the same for the norm topology. 
In Section 8, \magenta{we give some remarks on the norm residue symbols, the class field axiom, and an application to the genus theories.}\\ 

We note that id\`elic class field theory for 3-manifolds was \red{initially} studied by A.\ Sikora (\cite{Sikora2003}, \cite{Sikora-note}, \cite{Sikora-slide}). 
\fgreen{In the beginning of our study we were inspired by his work, 
although our approach presented in this paper is different from his.} 

\begin{nota}
For a manifold $X$, we simply denote by $H_{\fgreen{n}}(X)$ its \fgreen{$n$}-th homology group with coefficients in $\Z$. For a group $G$ and its subgroup $G_1$, we write $G_1< G$. 
\fgreen{A knot (resp. a link) in a 3-manifold $M$ means a topological embedding of $S^1$ (resp. $\sqcup S^1$) into $M$ or its image. 
A branched cover of a 3-manifold is branched over a link, and is a morphism of spaces equipped with base points outside the branch locus. 
If $h:N\to M$ is a branched cover of a 3-manifold $M$ and $L$ is a link in $M$ containing a subset of the branch locus only as its connected components, then} 
$h^{-1}(L)$ denotes the link in $N$ defined by its preimage. 
When $h$ is Galois (i.e., regular), $\Gal(h)= {\rm Deck}(h)$ denotes the Galois group, that is, the group of covering transformations of $N$ over $M$.
\end{nota}

\section{Very admissible links}
In the previous paper \cite{Niibo1}, the notion of  a very admissible knot set (link) $\mca{K}$ in a 3-manifold $M$ was introduced,  
\red{and was regarded} 
as an analogue object of the set of all the primes in a number field.
 However, the construction was not sufficient.  

In this section, we refine the definitions, prove the existence of a very admissible link $\mca{K}$ in \magenta{any closed, oriented, connected} 3-manifold $M$, and give some remarks. 

\violet{Id\`ele theory sums up all the local theories and describes the global theory. In number theory, each prime equips local theory, and in an extension of number field $F/k$, every prime of $F$ is above some prime of $k$.   
In 3-dimensional topology, local theories are the theories of branched covers over the tubular neighborhoods of  knots.} We define a very admissible link as follows, \magenta{and regard it as an analogue of the set of all the primes.} 

\begin{defn}
Let $M$ be a closed, oriented, connected 3-manifold. 
\red{Let} $\mca{K}$ be a link in $M$ \magenta{consisting of} \cyan{countably many (finite or infinite)} \magenta{\emph{tame} components.}  
We say $\mca{K}$ is an \emph{admissible link} of $M$ if the components of $\mca{K}$ generates $H_1(M)$. 
We say $\mca{K}$ is a \emph{very admissible link} of $M$ if for any finite 
cover $h:N\to M$ branched over a finite link in $\mca{K}$, the components of the link $h^{-1}(\mca{K})$ generates $H_{1}(N)$. 
\end{defn}  

\fgreen{Note that for a link consisting of countably infinite disjoint tame knots,  
by the Sielpi\'{n}ski theorem (\cite[Theorem 6.1.27]{EngelikingGT1989}), 
the notion of a \emph{component} makes sense in a natural way, 
that is, each connected component of its image is the image of some $S^1$ in the domain.}  

\magenta{
We may assume that $M$ is a $C^\infty$-manifold. 
We fix a finite $C^\infty$-triangulation $T$ on $M$. 
\fgreen{A knot $K:S^1\to M$} is called \emph{tame} if it satisfies the following equivalent conditions: 
(1) There is a self-homeomorphism $h$ of $M$ such that $h(K)$ is a subcomplex of some refinement of $T$. 
(2) There is a self-homeomorphism $h$ of $M$ such that $h(K)$ is a $C^\infty$-submanifold of $M$. 
(3) There is a tubular neighborhood of $K$, that is, a topological embedding $\iota_K:S^1\times D^2\to M$ with $\iota_K(S^1\times 0)=K$. } 

We note that ($\sharp$) if a neighborhood $V$ of $K$ is given, then $h$ in (1) and (2) can be taken so that 
it has a support in $V$ (i.e., it coincides with {\rm id} on $M-V$).

\begin{proof} 
(1) $\imp$ (2) : We may assume that $K$ itself is a subcomplex of some refinement $T'$ of $T$. 
For each $0$-simplex $v$ of $T'$ on $K$, by a self-homeomorphism of $M$ with support in a small neighborhood of $v$, 
we can modify $K$ so that $K$ is C$^\infty$ in a neighborhood of $v$. 
Doing the similar for every $v$, we obtain (2). 

\noindent 
(2) $\imp$ (3): We may assume that $K$ itself is a C$^\infty$-submanifold of $M$. 
A tubular neighborhood of $V$ is the total space of a $D^2$-bundle on $K\cong S^1$. 
Since $M$ is oriented, $V$ is orientable and hence is the trivial bundle. Hence (3). 

\noindent 
(3) $\imp$ (1): We use \cite[Theorem 5]{Moise1952AffV}: 
Let $M$ be a \red{metrized} 3-manifold with a fix triangulation $T$ and let $K$ be a closed subset of $M$. 
Suppose that there is a neighborhood $V$ of $K$ \red{in $M$} and a topological embedding $\iota:V\to M$ 
so that $\iota(K)$ is a subcomplex of a refinement of $T$. 
Then, there is a self-homeomorphism $h:M\to M$ such that 
$h(K)$ is a subcomplex of a refinement of $T$. 
In addition, for a given $\varepsilon >0$, there is some $h$ with its support in the $\varepsilon$-neighborhood of $K$. Moreover, we can take $h$ as closer to ${\rm id}$ as we want $\cdots$ ($\ast$). 
If we apply this theorem to our $M$ \red{with a metric}, $T,K,V:=\iota_K(S^1\times D^2)$, and the inclusion $\iota$, 
then we obtain (1). 

By noting ($\ast$) and the construction in (1)$\imp$ (2), we see ($\sharp$). 
\end{proof}

We also remark that (1), (2), (3) are equivalent to that $K$ is \emph{locally flat}, i.e., 
for each $x \in K$, there is a closed neighborhood $V$ so that $(V, V\cap K)$ is homeomorphic to $(D^3, D^1)$ as pairs 
(\cite[Theorem 8.1]{Moise1954AffVIII}, \cite[Theorem 9]{Bing1954}). 

A \red{finite} link $L:\sqcup S^1\inj M$ is called \emph{tame} if it satisfies the following equivalent conditions: 
(1) There is a self-homeomorphism $h$ of $M$ such that $h(L)$ is a subcomplex of some refinement of $T$. 
(2) There is a self-homeomorphism $h$ of $M$ such that $h(L)$ is a $C^\infty$-submanifold of $M$. 
(3) Each component $K: S^1\to M$ of $L$ is tame. 

The non-trivial part of this equivalence is to prove that (3) implies (1). We can prove it by (3) $\imp$ (1) for the knot case and the condition ($\sharp$) on the support of a self-homeomorphism $h$. 

\color{black}

\magenta{A finite link consisting of tame components always equips a tubular neighborhood as a link. 
An infinite link $L$ consisting of countably many tame component\red{s} quips a tubular neighborhood as a link 
if and only if it has no accumulation point. We do not eliminate the cases with accumulation points.} 

\magenta{For a tame knot $K$ in $M$, we denote a tubular neighborhood by $V_K$, which is unique up to ambient isotopy. 
For a link $L$ in $M$ consisting of countably many 
tame component\red{s}, we consider \emph{the formal} (or \emph{infinitesimal}) \emph{tubular neighborhood} $V_L:=\sqcup_{K\subset L}V_K$, where $K$ runs through all the components of $L$.} 
\magenta{\emph{The meridian} $\mu_K\in H_1(\del V_L)$ of $K$ is the generator of $\Ker(H_1(\del V_K) \to H_1(V_K))$ corresponding to the orientation of $K$. A \emph{longitude} $\lambda_K\in H_1(\del V_L)$ of $K$ is an element satisfying that $\mu_K$ and $\lambda_K$ form a basis of $H_1(\del V_K)$. We fix a longitude for each $K$. 
For a \magenta{finite} branched cover $h:N\to M$ and for each component of $h^{-1}(K)$ in $N$, we fix a longitude which is a component of the preimage of that of $K$.}

\begin{lem}\label{lemlink} 
Let $M$ be a closed, oriented, connected 3-manifold and let $L$ be a link in $M$ \magenta{consisting of countably many tame components. Then there is a link $\mca{L}$ in $M$ \cyan{containing} $L$, consisting of countably many tame components, and satisfying that}  
for any finite 
cover $h:N\to M$ branched over a finite sublink of $L$, 
$H_1(N)$ is generated by the components of the preimage $h^{-1}(\mca{L})$. \end{lem}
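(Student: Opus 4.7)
The plan is to enumerate the countably many finite branched covers in play and, for each one, to append finitely many auxiliary tame knots whose preimages (together with those previously chosen) generate $H_1$ of that cover; the output $\mca{L}$ is then the union of $L$ with all the auxiliary knots selected along the way. The set of (pointed isomorphism classes of) finite covers $h:N\to M$ branched over a finite sublink of $L$ is countable, since finite sublinks $L_0\subset L$ form a countable family and for each $L_0$ the finite covers of $M$ branched inside $L_0$ correspond to finite-index subgroups of the finitely generated group $\pi_1(M\setminus L_0)$. Enumerate them as $h_1,h_2,\ldots$ with $h_i:N_i\to M$ branched over $L_i^{\rm br}\subset L$, and note that each $H_1(N_i)$ is finitely generated since $N_i$ is compact.

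The geometric heart is the following \emph{realization claim}: for any class $\alpha\in H_1(N_i)$, any countable collection $\mca{A}$ of tame knots in $M$, and the branch locus $L_i^{\rm br}$, there exists a tame knot $K\subset M$, disjoint from $L_i^{\rm br}$ and from every knot in $\mca{A}$, such that some component of $h_i^{-1}(K)$ represents $\alpha$. To prove it, represent $\alpha$ by a smooth embedded loop $\gamma\subset N_i\setminus h_i^{-1}(L_i^{\rm br})$ (possible by transversality) and perturb $\gamma$ inside its homology class so that $h_i\circ\gamma:S^1\to M$ becomes a smooth embedding disjoint from the prescribed bad set. Each transverse self-intersection of $h_i\circ\gamma$ comes from distinct points $\gamma(s_1)\ne\gamma(s_2)$ of $N_i$ lying in a single unbranched fiber of $h_i$; since $h_i$ is a local homeomorphism near $\gamma(s_1)$, a small local isotopy of $\gamma$ there moves $h_i(\gamma)$ locally in the three-dimensional manifold $M$, and by general position the double point can be eliminated. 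The bad set to avoid is a countable union of one-dimensional subsets of $M$ and hence has empty interior, so the perturbation can be arranged to avoid it as well. Then $K:=h_i(\gamma)$ is a tame embedded knot in $M$ and $\gamma$ is a component of $h_i^{-1}(K)$ representing $\alpha$.

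With the realization claim in hand, build $\mca{L}$ inductively: set $\mca{L}_0:=L$, and at stage $i\ge 1$ apply the claim finitely many times---using the knots already in $\mca{L}_{i-1}$ as the collection $\mca{A}$---to produce tame knots $K_{i,1},\ldots,K_{i,n_i}$ so that the components of $h_i^{-1}(\mca{L}_i)$ generate $H_1(N_i)$, where $\mca{L}_i:=\mca{L}_{i-1}\cup\{K_{i,1},\ldots,K_{i,n_i}\}$. Only finitely many new knots are needed at each stage because $H_1(N_i)$ is finitely generated. Setting $\mca{L}:=\bigcup_i\mca{L}_i$ then yields a link in $M$ containing $L$, consisting of countably many tame components, with $H_1(N_i)$ generated by the components of $h_i^{-1}(\mca{L})$ for every $i$, as required.

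The main obstacle I expect is the realization claim, and specifically the dimension-three perturbation making $h_i\circ\gamma$ an embedding while staying disjoint from a prescribed countable family of knots and from the branch locus; what makes it tractable is that away from the branch locus $h_i$ is a local homeomorphism, so the double points of $h_i\circ\gamma$ have a standard transverse structure that can be removed by local isotopies without altering the homology class of $\gamma$. The remaining issues---countability of the enumeration and tameness of the new knots---are then routine.
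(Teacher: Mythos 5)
Your proposal is correct and follows essentially the same route as the paper: enumerate the countably many finite branched covers, then inductively realize the (finitely many) generators of each $H_1(N_i)$ by tame knots upstairs whose images in $M$ are embedded, tame, and disjoint from the branch locus and from everything previously chosen, taking the union over all stages. The one place where your argument is looser than the paper's is the genericity step --- where you say a countable union of one-dimensional subsets has empty interior so the perturbation can avoid it, the paper makes this precise via the Baire property of the completely metrizable space $C^\infty(S^1,N_i)$ (the set of loops disjoint from each prescribed component is open and dense, so the countable intersection is still dense); your general-position argument can be upgraded to exactly that statement.
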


\begin{proof} 
The set of all the finite 
branched covers of $M$ branched over finite sublinks of $L$ is countable, and can be written as  $\(h_i:N_i\to M\)_{i\in \N\red{=\N\cup \{0\}}}$, where $h_0=id_M$. 
\magenta{Indeed, for each finite sublink $L'\subset L$, 
\orange{finite} branched covers of $M$ branched over $L'$ corresponds to subgroups of \orange{$\pi_1(M-L')$} of finite indices. 
\orange{Since $\pi_1(M-L')$} is finitely generated 
group, \orange{such subgroups are countable.}}  


\magenta{We construct an inclusion sequence $L_0\subset L_1\subset \ldots \subset L_i \subset \ldots$ of links consisting of countably many tame components as follows. First, we put $L_{\red 0}=L$. Next, for $i\in \N_{\red{>0}}$, let $L_{i-1}$ be given. We \emph{claim} that there is a link $L_i$ in $M$ including $L_{i-1}$, consisting of countably many tame components, and satisfying that the components of the preimage $h_i^{-1}(L_i)$ generates $H_1(N_i)$. 
By putting $\mca{L}:=\cup_i L_i$, we obtain an expected link.} 


\magenta{
The \emph{claim} above can be  deduced immediately from the following assertion: \emph{For any finite branched cover $h:N\to M$ and the preimage $\wt{L}$ of any link in $M$ consisting of countably many tame components, there is a finite link $L'$ in $N-\wt{L}$  consisting of tame components and the image $h(L')$ being also a link.}} 

\fgreen{Note that $N$ is again a closed, oriented, connected 3-manifold.}
\magenta{
We may assume that $N$ is a $C^\infty$-manifold. 
On the space $C^\infty(S^1,N)$ of maps, since $S^1$ is compact,  
the well-known two topologies called \emph{the compact open topology (the weak topology)} and 
\emph{the Whitney topology (the strong topology)} coincide. 
It is completely metrizable space and satisfies \emph{the Baire property}, that is, 
\emph{for any countable family of open and dense subsets, \red{their} intersection is again dense}. 
(We refer to \cite{Hirsch1994} for the terminologies and the general facts stated here.)}  

\magenta{
Let $\{K_j\}_j$ denote the set of components of $\wt{L}$. 
Since $F_j:=\{K\in C^\infty(S^1,N) \mid K\cap K_j=\phi\}$ is open and dense, by the Baire property, the intersection $F:=\cap_j F_j$ is dense. 
Put $H_1(N)=\langle a_1,\ldots,a_r\rangle$, and 
let $A_1$ denote the set of tame knots $K\in C^\infty(S^1,N)$ satisfying $[K]=a_1$ whose 
image $h(K)$ in $M$ is also a tame knot. 
Then $A_1$ is open and non-empty. Therefore $A_1\cap F$ is non-empty, and we can take an element $K'_1$ of it. For $1\leq k \leq r$, 
if we replace $\wt{L}$ by $\wt{L}\cup K'_1 \cup \ldots \cup K'_k$ and do \red{a similar construction} for $a_{k+1}$ 
successively, then we complete the proof.}  
\end{proof}

\begin{thm}
Let $M$ be a closed, oriented, connected 3-manifold, and $L$ a link in $M$. Then, there is a very admissible link $\mca{K}$ 
\cyan{containing} $L$. 
\end{thm}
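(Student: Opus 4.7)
The plan is to bootstrap Lemma \ref{lemlink} by iterating it countably many times and taking a union. The lemma produces, from any link consisting of countably many tame components, a larger such link $\mca{L}$ which controls $H_1$ of covers branched over finite sublinks \emph{of the original link}; it does not yet control covers branched over sublinks of $\mca{L}$ itself. So one needs to close up under this operation. I would first reduce to the case where $L$ already consists of countably many tame components (replacing $L$ by any countable collection of tame knots containing the originally given components; if the given $L$ were not of this form, one could instead start from the empty link and append $L$ at the end, but for the statement of the theorem we may safely assume $L$ itself is of the required type).

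Next, I would inductively construct an ascending chain $L_0 \subset L_1 \subset L_2 \subset \cdots$ of links consisting of countably many tame components, by setting $L_0 := L$ and, given $L_i$, letting $L_{i+1}$ be the link $\mca{L}$ furnished by Lemma \ref{lemlink} applied to $L_i$. Then define
\[
\mca{K} := \bigcup_{i \in \N} L_i,
\]
which is a link (a countable union of countable sets is countable, and each component is tame) containing $L$. Accumulation points are explicitly permitted by the definition adopted in this paper, so no tension arises from passing to the countable union.

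To verify that $\mca{K}$ is very admissible, let $h : N \to M$ be any finite cover branched over a finite sublink $L' \subset \mca{K}$. Since $L'$ has only finitely many components and each component lies in some $L_i$, there is a single index $i$ with $L' \subset L_i$; then by the defining property of $L_{i+1}$, the components of $h^{-1}(L_{i+1})$ generate $H_1(N)$, and \emph{a fortiori} so do the components of the larger link $h^{-1}(\mca{K})$. Specializing to $h = \mathrm{id}_M$ (the unique cover branched over the empty link, which is trivially a sublink of $L_0$) gives plain admissibility, namely that the components of $\mca{K}$ generate $H_1(M)$.

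The real content of the theorem is entirely contained in Lemma \ref{lemlink}; this final step is a routine closure-under-a-countable-operation argument and I anticipate no substantive obstacle. The only small point deserving care is the observation that every \emph{finite} sublink of $\mca{K} = \bigcup_i L_i$ is automatically contained in some $L_i$, which is precisely what allows a single application of the lemma to handle covers branched over arbitrary finite sublinks of $\mca{K}$ in the limit.
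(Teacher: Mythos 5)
Your proposal is correct and follows essentially the same route as the paper: iterate Lemma \ref{lemlink} to build an ascending chain of links starting from $L$, take the union, and use that any finite sublink of the union already lies in some finite stage. The only cosmetic difference is that the paper enlarges the initial link $\mca{K}_0$ so that it generates $H_1(M)$ before iterating, whereas you recover plain admissibility by applying the lemma's conclusion to $h=\mathrm{id}_M$; both are fine.
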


\begin{proof}
We construct an inclusion sequence of links $\(\mca{K}_i\)_i$ as follows: 
First, we \red{take a link $\mca{K}_0$ which includes $L$ and generates $H_1(M)$.} 
Next, for $i \in \N_{\red >0}$, let $\mca{K}_{i-1}$ be given, and let $\mca{K}_i$ be a link obtained from $\mca{K}_{i-1}$ by 
\red{Lemma} \ref{lemlink}. Then the union $\mca{K}:=\cup \mca{K}_i$ is a very admissible link. 
\end{proof}

Links $\mca{L}$ and $\mca{K}$ in the lemma and theorem above may be taken smaller than in the constructions. It may be interesting to ask whether they can be finite. %
Let $M=S^3$. The unknot is very admissible link. 
If $L$ is the trefoil, 
by taking branched 2-cover, we see that $\mca{K}_1$ is greater than $L$. 
We expect that $\mca{K}$ has to be infinite. %
Next, let $M$ be a 3-manifold, and $L$ a minimum admissible link ($L$ can be empty). 
For an integral homology 3-sphere $M$, we have $\mca{K}=L=\phi$. 
For a lens space $M=L(p,1)$ or $M=S^2\times S^1$, 
we can take a knot \red{(the core loop)} 
$\mca{K}=L=K$. %

In the latter sections of this paper, we assume that a very admissible link $\mca{K}$ is an infinite link. 
However, our argument are applicable for finite $\mca{K}$ also. 

\begin{rem}[variants]
\noindent (1) \orange{In the definition of a very admissible $\mca{K}$ of $M$, we consider every finite branched covers which are necessarily abelian, so that for each finite abelian branched cover $h:N\to M$, the preimage $h^{-1}(\mca{K})$ is also a very admissible link of $N$.} 
We will discuss a weaker condition on 
\orange{$\mca{K}$} in the end of Section 5, \textbf{Remark} \ref{remonK}. 

\noindent 
(2) 
Let $L$ be an infinite link such that any (ambient isotopy class of) finite link in $M$ is contained in $L$. 
There exists such a link. Indeed, 
since the classes of finite links are countable, 
by putting links side by side in $S^{3}=\R^{3}\cup \(\infty\)$, we obtain such a link $L$, 
with one limit point at $\infty$. 
By using the metric of $\R^{3}$, we can take a tubular neighborhood of $\mca{L}$. 
If we start the construction form such an infinite link, then 
we obtain a special very admissible link $\mca{K}$, which controls all the 
\orange{finite} branched covers of $M$ branched over any finite links in $M$. 
\end{rem}

According to \cite{Morishita2012}, counterparts of infinite primes are ends of 3-manifolds. 
F.\ Hajir also studies cusps of hyperbolic 3-manifolds as analogues of  infinite primes of number fields (\cite{Hajir2012}). 
In this paper, 
since we deal with closed manifolds, the counterpart of the set of infinite primes is empty.

\section{The universal $\mca{K}$-branched cover}
Class field theory deals with all the abelian extension of a number field $k$ in a fixed algebraic closure $\ol{k}$ of $k$. 
In this section, we briefly review the analogies between knots and primes. Then, 
for a 3-manifold $M$ equipped with an infinite (very admissible) link $\mca{K}$, we introduce the notion of the universal $\mca{K}$-branched cover, which is an analogue of an algebraic closure of a number field. We also discuss the role of base points.\\

The analogies between knots and primes has been studied systematically by 
B.\ Mazur (\cite{Mazur1963}), M.\ Kapranov (\cite{Kapranov1995}), A.\ Reznikov (\cite{Reznikov1997}, \cite{Reznikov2000}), M.\ Morishita (\cite{Morishita2002}, \cite{Morishita2010}, \cite{Morishita2012}), 
A.\ Sikora (\cite{Sikora2003}) and others, 
and their research is called arithmetic topology. 
Here is a basic dictionary of the analogies.
For a number field $k$, let $\mca{O}_k$ denote the ring of integer. 
\[
\begin{tabular}{|c||c|} \hline 
3-manifold $M$ & number ring $\Spec \mca{O}_k$ \\ 
knot $K:S^1\inj M$ & prime $\p: \Spec(\F_{\p}) \inj \Spec \mca{O}_k$ \\ 
link $L=\(K_1,...,K_r\)
$ & set of primes $S=\(\p_1,...,\p_r\)$\\ 
(branched) cover $h:N\to M$ & (ramified) extension $F/k$\\ \hline 
fundamental group $\pi_1(M)$ & \'etale fundamental group $\pi_1^{\text{\'et}}(\Spec \mca{O}_k)$\\ 
\magenta{$\pi_1(M-L)$} & \magenta{$\pi_1^{\text{\'et}}(\Spec \mca{O}_k -S)$} \\\hline 
1st homology group $H_1(M)$ 
& ideal class group $\Cl(k)$
\\ \hline 
\end{tabular}
\] 
\red{We put $\ol{\Spec \mca{O}_k}=\Spec \mca{O}_k\cup\{\text{infinite primes}\}$.} 
There is also
an analogy between the Hurewicz isomorphism and the Artin reciprocity in unramified class field theory: 
\[
\begin{tabular}{|c||c|} \hline
 $H_1(M) \cong \Gal(M^{\rm ab}/M) \cong \pi_1(M)^{\rm ab}$& $\Cl(k) \cong \Gal(k^{\rm ab}_{\rm ur}/k)\cong \pi_1^{\text{\'et}}(\red{\ol{\Spec \mca{O}_k}})^{\rm ab}$\\  \hline 
\end{tabular}
\]
Here, $M^{\rm ab}\to M$ and $k^{\rm ab}_{\rm ur}/k$ denote the maximal abelian cover and the maximal unramified abelian extension respectively. 
Moreover, there are branched Galois theories in a parallel manner, where the fundamental groups of the exteriors of knots and primes dominate the branched covers and the ramified extensions respectively. 
Our project of id\`elic class field theory for 3-manifolds aims to pursue the research of analogies in this line. For more analogies, we consult \cite{Ueki1}, \cite{Ueki2}, \cite{Ueki3}, \cite{MTTU}, and \cite{Niibo1} \fgreen{also}.\\ 

 


In the following, we discuss an analogue of an/the algebraic closure of a number field. 
If we say branched covers, 
unless otherwise mentioned, we 
consider \emph{branched covers endowed with base points}, that is, we fix base points in all spaces that are compatible with covering maps. For a space $X$, we denote by $b_X$ the base point.\\ 

First, we recall the notion of an isomorphism of branched covers. 
For covers $h:N\to M$ and $\fgreen{h'}:N'\to M$ branched over $L$, 
we say they are \emph{isomorphic} (as branched covers endowed with base points) and denote by $h\cong h'$ if 
there is a (unique) homeomorphism $f:(N,b_N)\congto (N',b_{N'})$ such that $h=h'\circ f$.  
Let $\ul{h}:Y_L\to X_L$ and $\fgreen{\ul{h}'}:Y'_L\to X_L$ denote the restrictions to the exteriors. 
Then, $h\cong h'$ is equivalent to that $\ul{h}_*(\pi_1(Y_L,b_{Y_L}))=\fgreen{\ul{h}'}_*(\pi_1(Y'_L,b_{Y_{L'}}))$ in $\pi_1(X_L,b_{X_L})$. 

Such notion is extended to the class of branched pro-covers, which are objects obtained as inverse limits of finite branched covers.\\

Next, we introduce an analogue notion of an algebraic closure of a number field. 
For a finite link $L$ in a 3-manifold, a branched pro-cover $h_L:\wt{M_L}\to M$ is a \emph{universal $L$-branched cover} of $M$ if it satisfies a certain universality: 
$h_L:\wt{M_L}\to M$ is a minimal object such that any finite cover of $M$ branched over $L$ factor through it. 
It is unique up to the canonical isomorphisms, and it can be obtained by Fox completion of a universal cover of the exterior $\ul{h_L}:\wt{X_L}\to X_L$. 
(Note that Fox completion is defined for a spread of locally connected T$_1$-spaces in general. (\cite{Fox1957})) 

Now, let $M$ be a 3-manifold equipped with an infinite (very admissible) link $\mca{K}$. A branched pro-cover $h_{\mca{K}}:\wt{M_{\mca{K}}}\to M$ is a \emph{universal $\mca{K}$-branched cover} of $M$ if it satisfies a certain universality: 
$h_{\mca{K}}:\wt{M_{\mca{K}}}\to M$ is a minimal object such that any finite cover of $M$ branched over a finite link $L$ in $\mca{K}$ factor through it. 

It can be obtained as the inverse limit of a family of universal $L$-branched covers, as follows: 
For each finite link $L$ in $\mca{K}$, let $h_L:\wt{M_L}\to M$ be a universal $L$-branched cover of $M$. 
By the universality, for each $L\subset L'$, we have a unique map $f_{L,L'}:\wt{M_{L'}} \to \wt{M_L}$ such that $h_{L'}=h_L\circ f_{L,L'}$. Thus $\{h_L\}_{L\subset \mca{K}}$ forms an inverse system. 
By putting $\wt{M_{\mca{K}}}=\varprojlim_{\violet{L \subset \mca{K}}} \wt{M_L}$, 
we obtained a universal $\mca{K}$-branched cover $h_{\mca{K}}:\wt{M_{\mca{K}}}\to M$ as 
the composite of the natural map $\wt{M_{\mca{K}}}\to \wt{M_L}$ and $h_L$. 

For the universal $\mca{K}$-branched cover, the inverse limit $\pi_1(X_{\mca{K}})$ of the fundamental groups of exteriors $\pi_1(X_L)$ $(L\subset \mca{K})$ acts on it in a natural way. The finite branched covers of $M$ obtained as quotients of $h_{\mca{K}}$ by subgroups of $\pi_1(X_{\mca{K}})$ form a complete system of representatives of the isomorphism classes of covers of $M$ branched over links in $\mca{K}$.

Therefore, in the latter section of this paper, if we take $(M,\mca{K})$, 
we silently fix a universal $\mca{K}$-branched cover, 
call it ``the'' universal $\mca{K}$-branched cover,
and restrict our argument to the branched subcovers obtained as its quotients.\\

Finally, we discuss an analogue of a base point.
The following facts explain the role of base points in branched covers: 

\begin{prop}
\noindent 
(1) For $(M,\mca{K})$, we fix a universal $\mca{K}$-branched cover $h_{\mca{K}}$. Then, for a branched cover $h:N\to M$ whose base point is forgotten, 
taking a branched pro-cover $f:\wt{M_{\mca{K}}}\to N$ such that $h\circ f=h_{\mca{K}}$ is equivalent to fixing a base point in $N$ such that $h(b_N)=b_M$. 

\noindent 
(2) Let $h:N\to M$ be a branched cover. Then, a base point of a universal $\mca{K}$-branched cover $h_{\mca{K}}$ 
defines a branched pro-cover $f:\wt{M_{\mca{K}}}\to N$ such that $h_{\mca{K}}=h\circ f$. 
\end{prop}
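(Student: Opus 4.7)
The plan is to reduce both statements to the classical lifting criterion of covering space theory applied to the exteriors of the branch loci, and then to extend the resulting lifts across the branch loci via Fox completion.

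For part (1), the forward direction is immediate: given a factorization $f:\wt{M_{\mca{K}}}\to N$ with $h\circ f=h_{\mca{K}}$, put $b_N:=f(b_{\wt{M_{\mca{K}}}})$; then $h(b_N)=h_{\mca{K}}(b_{\wt{M_{\mca{K}}}})=b_M$. For the backward direction, suppose $b_N$ is chosen with $h(b_N)=b_M$. Pick a finite sublink $L\subset\mca{K}$ containing the branch locus of $h$, set $X_L:=M-L$, $Y_L:=N-h^{-1}(L)$, and let $\wt{X_{\mca{K}}}$ denote the preimage of $X_{\mca{K}}:=M-\mca{K}$ in $\wt{M_{\mca{K}}}$. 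Restricting $h$ and $h_{\mca{K}}$ to these exteriors yields an ordinary covering map $\ul{h}:Y_L\to X_L$ together with a map $\ul{h_{\mca{K}}}:\wt{X_{\mca{K}}}\to X_L$. The universality of $h_{\mca{K}}$, combined with the fact that $h$ is branched over a finite link in $\mca{K}$, forces $(\ul{h_{\mca{K}}})_*\pi_1(\wt{X_{\mca{K}}},b_{\wt{X_{\mca{K}}}})\subset \ul{h}_*\pi_1(Y_L,b_{Y_L})$ inside $\pi_1(X_L,b_{X_L})$, so the classical lifting criterion supplies a unique pointed continuous map $\ul{f}:\wt{X_{\mca{K}}}\to Y_L$ sending $b_{\wt{X_{\mca{K}}}}$ to $b_{Y_L}$ and satisfying $\ul{h}\circ\ul{f}=\ul{h_{\mca{K}}}$. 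Applying Fox completion extends $\ul{f}$ to a branched pro-cover $f:\wt{M_{\mca{K}}}\to N$ with $h\circ f=h_{\mca{K}}$. A direct check shows the two assignments are mutually inverse.

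Part (2) is then essentially formal: our convention endows $N$ with a base point $b_N$ satisfying $h(b_N)=b_M=h_{\mca{K}}(b_{\wt{M_{\mca{K}}}})$, so the hypotheses of part (1) are automatically in force, and the unique $f$ produced there is the required pro-cover.

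The main obstacle I foresee is verifying that the exterior-level lift extends unambiguously to a branched pro-cover. This requires the functoriality of Fox completion on spreads of locally connected $T_1$-spaces (as recalled after \cite{Fox1957}) together with independence of the auxiliary choice of $L$, which follows from the uniqueness in the lifting criterion and the identification $\wt{M_{\mca{K}}}=\varprojlim_{L\subset\mca{K}}\wt{M_L}$. A small compatibility check, that each meridian around a component of $h_{\mca{K}}^{-1}(\mca{K})$ in $\wt{M_{\mca{K}}}$ is sent to a meridian around a component of $h^{-1}(\mca{K})$ in $N$, confirms that the Fox extension faithfully respects the branching data.
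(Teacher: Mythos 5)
Your argument is correct, and in fact the paper offers no proof of this proposition at all --- it is stated as a pair of ``facts'' explaining the role of base points --- so the lifting-criterion-plus-Fox-completion route you take is exactly the standard argument the authors are implicitly relying on. Two small remarks. First, the subgroup condition $(\ul{h_{\mca{K}}})_*\pi_1(\wt{X_{\mca{K}}},b_{\wt{X_{\mca{K}}}})\subset \ul{h}_*\pi_1(Y_L,b_{Y_L})$ holds for the simplest possible reason: the left-hand side is trivial, since the map $\wt{X_{\mca{K}}}\to X_L$ factors through the universal cover $\wt{X_L}$ of $X_L$; invoking ``universality'' is a detour. Second, and this is the only place where your write-up is slightly imprecise: the classical lifting criterion is stated for connected, locally path-connected spaces, and the pro-space $\wt{X_{\mca{K}}}=\varprojlim_L\wt{X_L}$ need not obviously satisfy these hypotheses. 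The clean fix is to perform the lift at a finite stage --- produce the unique pointed lift $\wt{X_L}\to Y_L$ of the universal cover $\wt{X_L}\to X_L$ through $\ul{h}:Y_L\to X_L$, Fox-complete to $\wt{M_L}\to N$, and then precompose with the canonical projection $\wt{M_{\mca{K}}}\to\wt{M_L}$ --- after which your independence-of-$L$ argument via uniqueness of pointed lifts goes through verbatim. With that adjustment the bijection in (1) and the formal deduction of (2) are both sound.
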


An analogue of a base point in a 3-manifold is a geometric point of a number field. 
Let $\Omega$ be a sufficiently large field which includes $\Q$, for instance, $\Omega=\C$. 
Then, for a number field $k$, choosing a geometric point $x:\Spec \Omega \to \Spec \mca{O}_k$ is equivalent to choosing an inclusion $k\inj \Omega$.  
Moreover, choosing base points in a cover $h:N\to M$ which are compatible with the covering map is an analogue of choosing inclusion $k\subset F\inj \Omega$ for an extension $F/k$. 
For an algebraic closure $\ol{k}/k$ and an extension $F/k$ of a number field $k$, we have following facts: 

\begin{prop}
\noindent 
(1) If we fix $\ol{k}/k$ in $\Omega$, 
taking an inclusion $F\inj \ol{k}$
is equivalent to taking an inclusion $F\inj \Omega$. 

\noindent 
(2) For an extension $F/k$ in $\Omega$, 
an inclusion $\ol{k}\inj \Omega$ defines $F\inj \ol{k}$. 
\end{prop}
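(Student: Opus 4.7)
The plan is to reduce both assertions to the universal property of the algebraic closure: if $\ol{k}\inj \Omega$ is a $k$-algebra embedding, its image coincides with the set of elements of $\Omega$ algebraic over $k$. Since $F/k$ is an algebraic extension of number fields, any $k$-algebra embedding $F\inj \Omega$ has its image contained in this distinguished subfield, so it factors uniquely through $\ol{k}\inj \Omega$. This single observation will carry both (1) and (2).

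For (1), I would argue the two directions separately. Given the fixed inclusion $\ol{k}\inj \Omega$, the forward map is simply composition: any inclusion $F\inj \ol{k}$ yields an inclusion $F\inj \Omega$. Conversely, let $\iota:F\inj \Omega$ be a $k$-algebra embedding. Because $F/k$ is algebraic, $\iota(F)$ consists of elements of $\Omega$ algebraic over $k$, hence lies inside the image of $\ol{k}\inj \Omega$; injectivity of the latter provides a unique $k$-algebra embedding $F\inj \ol{k}$ whose composition with $\ol{k}\inj \Omega$ recovers $\iota$. These two assignments are mutually inverse, yielding the claimed equivalence.

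Assertion (2) is then an immediate consequence of the backward direction of (1): the datum of $F\inj \Omega$ extending $k\inj \Omega$, together with a chosen $\ol{k}\inj \Omega$, determines the factorization $F\inj \ol{k}$ by the argument above.

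There is essentially no obstacle; the content is purely formal. Its role is to make explicit the number-theoretic counterpart — in line with the dictionary developed in the paper — of the preceding proposition on base points in a universal $\mca{K}$-branched cover, so that the reader sees fixing $b_N$ compatibly with $h:N\to M$ as the topological analogue of fixing a $k$-embedding $F\inj \Omega$ compatibly with an algebraic closure.
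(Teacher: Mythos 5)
Your argument is correct and is exactly the standard one: the image of $\ol{k}$ in $\Omega$ is the subfield of elements algebraic over $k$, so any $k$-embedding $F\inj\Omega$ of the algebraic extension $F/k$ factors uniquely through $\ol{k}\inj\Omega$. The paper states this proposition without proof, treating it as the well-known algebraic fact underlying the analogy with base points, so your write-up simply supplies the argument the paper implicitly relies on.
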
 

In addition, we have 
$\Spec \mca{O}_k=\{{\rm finite\ primes}\}\cup \Spec k$, and 
$(\Spec k)(\Omega)=\{\Omega{\rm \mathchar`-rational\ points\ of} \Spec k\}
:=\Hom (\Spec \Omega, \Spec k)
\cong \Hom (k,\Omega)$.  
Accordingly, choosing a geometric point (an injection) $k\inj \Omega$ is an analogue of choosing a base point in the exterior of $\mca{K}$ in $M$. 
If $k/\Q$ is Galois, we have a non canonical isomorphism
$\{$the choices of a geometric point of $k\}=\Hom (k,\Omega) \cong \Gal (k/\Q)$. 
This map depends on the fact that an inclusion of $\Q$ into a field is unique. 
In order to state an analogue for $(M,\mca{K})$, we need to fix an analogue of $k/\Q$. 
If we fix a Galois branched cover $h_M:M\to S^3$ whose base point is forgotten, an infinite link $\ul{\mca{K}}$ in $S^3$ such that $h^{-1}(\ul{\mca{K}})=\mca{K}$, and a base point $b_0$ in $S^3$, 
then we have a non-canonical map $\{$the choices of base points in $M \}\cong \Gal(h_M)$. \\

Thereby, we obtained the following dictionary: 
\[
\begin{tabular}{|c||c|} \hline
3-manifold with very admissible link 
$(M,\mca{K})$& 
number ring 
$\Spec \mca{O}_k$\\
\hline 
universal $\mca{K}$-branched cover $h_\mca{K}:\wt{M_\mca{K}}\to M$&algebraic closure $\ol{k}/k$\\
 \hline 
 base point $b_M: \{{\rm pt}\} \inj M$
& geometric point $x:\Spec \Omega \to \Spec \mca{O}_k$\\  \hline 
\end{tabular}
\] 

In this paper, since we consider only regular (Galois) covers, we can forget  base points. Then weaker equivalence classes of branched covers should be considered.

\section{Id\`elic class field theory for number fields}

In this section, we briefly review the id\`elic class field theory for number fields, whose topological analogues will be studied in later sections. 
We consult \cite{KKS2} and \cite{Neukirch} as basic references for this section. 

\subsection{Local theory}

We firstly review the local theory. 
Let $k$ be a number field, that is, a finite extension of the rationals $\Q$,
and let $\p\subset \mca{O}_k$ be \red{a prime ideal of its integer ring}. 
Then, for a local field $k_\p$, we have the following commutative diagram of splitting exact sequences.
$$
\xymatrix{
\red{1}\ar[r] &\mca{O}_\p^\times \ar[r] \ar[d] & k_\p^\times \ar[r] ^{v_\p} \ar[d]^{\rho_\p}&\Z \ar[r] \ar[d] &0\\%
\red{1}\ar[r] &\Gal(k_\p^{\rm ab}/k_\p^{\rm ur})\ar[r] &\Gal(k_\p^{\rm ab}/k_\p) \ar[r] &\Gal(k_\p^{\rm ur}/k_\p) \ar[r] &\red{1}}$$
Here, $\mca{O}_\p^{\times}$ is the local unit group, $v_\p$ is the \emph{valuation}, 
$k_\p^{\rm ab}/k_\p$ is the maximal abelian extension, and $k_\p^{\rm ur}/k_\p$ is  the maximal unramified abelian extension. The map $\rho_\p$ is called \emph{the local reciprocity homomorphism}, which is a canonical injective homomorphism with dense image, and controls all the abelian extensions of the local field $k_\p$. 
In the lower line,  $I_\p^{\rm ab}=\Gal(k_\p^{\rm ab}/k_\p^{\rm ur})$ is the abelian quotient of the inertia group, and we have $\Gal(k_\p^{\rm ur}/k_\p) \cong \Gal(\ol{\F}_\p/\F_\p)\cong \wh{\Z}$. 

The theory of a local field is rather complicated. 
There are non-abelian extensions, and there are notions of wild and tame for ramifications. 
For the tame quotients, we have an exact sequence 
\red{
$$1\to I_\p^t\to \Gal(\ol{k}_\p/k)\to \Gal(\ol{\F}_\p/\F_\p)\to 1,$$ 
where $I_\p^t=\langle\tau\rangle\cong \prod_{l\neq p}\Z_l$, 
$\Gal(\ol{k}_\p	/k)= \pi_1^t(\Spec (k_\p))=\langle \tau,\sigma \mid \tau^{q-1}[\tau,\sigma]\rangle$, 
and $\Gal(\ol{\F}_\p/\F_\p)=\langle \sigma \rangle\cong \wh{\Z}\cong \prod_p \Z_p$.
They are topologically generated by a monodromy $\tau$ and the Frobenius $\sigma$. }






\red{The local theory of an infinite prime $\p:k \overset{\wt{\p}}{\inj} \C\to \R_{\geq0}; x\mapsto |\wt{\p}(x)|$ is described as follows. If $\p$ is real, then $v_\p:k^\times \to \R; x\mapsto \log|\wt{\p}(x)|$ yields an exact sequence $1\to \{\pm 1\}\to \R^\times \overset{v_\p}{\to}\R\to 0$. By taking Housdorffication with respect to the local norm topology, 
we obtain an exact sequence $1\to \{\pm 1\}\to \{\pm1\}\to 0\to 0$. 
If $\p$ is complex, then we have an exact sequence $1\to S^1\to \C^\times \overset{v_\p}{\to}\R\to 0$, and obtain an exact sequence $1\to 1\to 1\to 0 \to 0$ of trivial terms in a similar way. 
We put $\mca{O}_\p^\times=\{\pm 1\}$ or $1$ according as $\p$ is real or complex.  
In both cases, there are commutative diagrams similar to the case of finite primes. }  



\subsection{Definitions} 
Next, we review the global theory. 
Let $k$ be a number field. We define the {\it id\`ele group} $I_k$ of $k$ by the following restricted product of $k_{\p}^\times$ with respect to the local unit groups 
\red{$\mca{O}_\p^\times$}  
 over all \red{finite and infinite} primes ${\p}$ of $k$:
\[I_k:=\restprod_{\p}k_\p^{\times}=
\(\, (a_\p)_{\p} \in \prod_{\p:\text{ prime}} k_\p^{\times} \ \middle|\ 
 v_{\p}(a_{\p})=0 \text{ for almost all finite primes } \p\).\] 
\red{This is the restricted products with respect to the local topology on $k_\p^{\times}$ (see Subsection \ref{st topology}) and the family of open subgroups $\(\mca{O}_\p^{\times}< k_\p^{\times}\)_\p$.}  

Since we have $v_{\p}(a)=0$ for $a \in k^\times$ and for almost all finite primes $\p$, 
$k^\times$ is embedded into $I_k$ diagonally. 
We define the {\it principal id\`ele group} $P_k$ of $k$ by the image of 
\magenta{the diagonal embedding $\Delta:k^\times \to I_k$}, 
and the {\it id\`ele class group} of $k$ by the quotient 
$C_k := I_k/P_k .$ 

Then, the homomorphism to the ideal group 
$\varphi:I_k \to I(k)
 ; \  
(a_\p)_\p \mapsto \prod_\p \p^{v_{\p}(a_\p)} $ 
induces an isomorphism 
\[
I_k/(U_k\cdot P_k) \cong \Cl(k),\]
where $U_k=\Ker \varphi=\prod_\p \red{\mca{O}_\p^\times}$ 
denotes the unit id\`ele group and $\Cl(k)$ denotes the ideal class group of $k$. 

\subsection{Standard topology} \label{st topology}

The id\`ele class group $C_k$ equips the \emph{standard topology}, which is  the quotient topology of the \emph{restricted product topology} on the id\`ele group $I_k$ of the local topologies, defined as follows.

We firstly consider on $\mca{O}_\p^{\times}$ the relative topology of \emph{\red{the} local norm topology} of $k_\p^{\times}$, and re-define the \emph{local topology} on $k_\p^{\times}$  as the unique topology such that the inclusion $\mca{O}_\p^{\times}\inj k_\p^{\times}$ is open and continuous. 
Next, for each finite set of primes $T$ which includes all the infinite primes, we consider the product topogy on 
$G(T)=\prod_{\p \in T}k_\p^{\times}\times \prod_{\p\not\in T}\mca{O}_\p^{\times}$. 
Then, we define the \emph{standard topology} on $I_k$ so that 
each subgroup $H<I_k$ is open if and only if $H\cap G(T)$ is open for every $T$. 

This standard topology on $C_k$ differs from the one defined as the quotient topology of relative topology of product topology of the local topologies on $I_k< \prod k_\p^{\times}$, and it is finer than the latter. 


\subsection{Norm topology}

For a finite abelian extension $F/k$,  
the norm map $N_{F/k}:C_F \to C_k$ is defined as follows. 

Let $\p$ be a prime of $k$ and $F_{\p}^\times:= \prod_{\P|\p} F_\P^\times$.  
Each $\alpha_\p\in F_{\p}^\times$ defines a $k_\p$-linear automorphism $\alpha_{\p}:F_\p^\times \to F_\p^\times;\ x\mapsto \alpha_{\p}x$,
and the norm of $\alpha_{\p}$ is defined by
$N_{F_{\p}/k_{\p}}(\alpha_{\p})= \det(\alpha_{\p}).$ 
It induces a homomorphism
$N_{F_{\p}/k_{\p}}:F_{\p}^\times \to k_{\p}^\times,$ 
and the norm homomorphism 
$N_{F/k}: I_F \to I_k$ on the id\`ele groups. 
Since $N_{F/k}$ sends the principal id\`eles to principal id\`eles, 
it also induces the norm homomorphism $N_{F/k}:C_F\to C_k$ on the id\`ele class groups.

For a number field $k$, the id\`ele class group $C_k$ equips the {\it norm topology}, so that it is a topological group, and  the family of $N_{F/k}(C_F)$ is a fundamental system of neighborhoods of $0$, 
where $F/k$ runs through all the finite abelian extensions of $k$. 
\begin{prop}A subgroup $H$ of $C_k$ is open and of finite index with respect to the standard topology if and only if it is open with respect to the norm topology.\end{prop}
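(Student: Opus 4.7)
My plan is to reduce the proposition to the classical existence theorem of id\`elic class field theory for number fields, which asserts a bijection between the finite abelian extensions $F/k$ and the open subgroups of finite index of $C_k$ (with respect to the standard topology), where $F/k$ corresponds to its norm subgroup $N_{F/k}(C_F)$; see e.g.\ \cite{KKS2}, \cite{Neukirch}. Given this ingredient, the equivalence becomes a formal matching of the two definitions of openness.

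First I would handle the direction ``open in the norm topology implies open and of finite index in the standard topology''. Let $H$ be open in the norm topology. By the very definition of that topology, $H$ contains a basic neighborhood of the identity, namely $N_{F/k}(C_F)$ for some finite abelian extension $F/k$. The existence theorem guarantees that $N_{F/k}(C_F)$ is itself open and of finite index in the standard topology (the index being $|\Gal(F/k)|$ via the global reciprocity). Then $H$, as a supergroup of an open subgroup, is open in the standard topology, and $[C_k:H]$ divides $[C_k:N_{F/k}(C_F)]$ and is therefore finite.

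Conversely, suppose $H$ is open and of finite index in the standard topology. The existence theorem provides a finite abelian extension $F/k$ with $H = N_{F/k}(C_F)$. But then $H$ is by definition a basic open neighborhood of the identity in the norm topology, hence open in the norm topology.

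The only nontrivial ingredient is the existence theorem itself, which is the main obstacle in that it is the deep content of global class field theory. In this expository section, however, it is legitimate to invoke it from the standard references, so no further argument is needed. The whole purpose of the present proposition is to set up the analogy that will be established independently for 3-manifolds in the later sections of the paper.
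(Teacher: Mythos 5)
Your argument is correct, but note that the paper itself offers no proof of this proposition: it sits in the expository Section 4 reviewing classical id\`elic class field theory for number fields and is quoted from \cite{KKS2} and \cite{Neukirch} as a known fact, so there is no ``paper's proof'' to match. Your reduction to the existence theorem is a legitimate way to organize the logic, with one caveat: the ``Moreover'' clause of the paper's Theorem 4.1\,(2) is word-for-word the proposition you are proving, so you must invoke only the \emph{bijection} part of that theorem (finite abelian $F/k$ correspond bijectively to the open finite-index subgroups of $C_k$ for the standard topology); citing the full statement would be vacuously circular. With that restriction both directions go through exactly as you say: a norm-open subgroup contains some $N_{F/k}(C_F)$, which is open of finite index (index $|\Gal(F/k)|$ by reciprocity), and any supergroup of an open finite-index subgroup inherits both properties; conversely, an open finite-index subgroup equals some $N_{F/k}(C_F)$ by surjectivity of the correspondence and is therefore a basic neighborhood of $0$ in the norm topology. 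The only thing your write-up understates is where the weight lies: the entire difficulty is concentrated in the surjectivity of $F\mapsto N_{F/k}(C_F)$, and the openness of $N_{F/k}(C_F)$ in the standard topology rests on the openness of the local norm groups $N_{F_\P/k_\p}(F_\P^\times)$ in $k_\p^\times$ --- a point worth recording, since it is precisely this local-to-global mechanism that the authors later imitate for $H_1(\del V_K)$ in Sections 6 and 7.
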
 

\subsection{Global theory}
Here is a main theorem of id\`elic class field theory for number fields (cf. \cite{Neukirch}, \S 6, Theorem 6.1): 

\begin{thm}[Id\`elic class field theory for number fields]\label{global CFT} Let $k$ be a number field and let $k^{\rm ab}$ denote the maximal abelian extension of $k$ which are fixed in $\C$.\ \\
{\rm (1) (Artin's global reciprocity law.)} There is a canonical surjective homomorphism, called the global reciprocity map,
\[\rho_{k}:C_k \to \Gal(k^{\rm ab}/k)\]
which has the following properties:\\
{\rm (i)} For any finite abelian extension $F/k$ in $\C$,
$\rho_k$ induces an isomorphism
\[C_k/N_{F/k}(C_F)\cong \Gal(F/k).\]
{\rm (ii)} For each prime $\p$ of $k$, we have the following commutative diagram
\[\xymatrix{
k_\p^{\times} \ar[d]_{\iota_\p} \ar[r]^(0.3){\rho_{k_\p}} \ar@{}[dr]|\circlearrowleft & \Gal(k_\p^{\rm ab}/k_{\p}) \ar[d]^{} \\
C_k \ar[r]_(0.3){\rho_k} & \Gal(k^{\rm ab}/k), 
}\]
where $\iota_{\p}$ is the map induced by the natural inclusion $k_\p^\times \to I_k$. 
\ \\
{\rm (2) (The existence theorem.)} The correspondence 
$$F\mapsto \mca{N}=N_{F/k}(C_F)$$
gives a bijection between the set of finite abelian extensions $F/k$ in $\C$ and
the set of open subgroups $\mca{N}$ of finite indices of $C_k$ with respect to the standard topology. 
Moreover, the latter set coincides with 
the set of open subgroups of $C_k$ with respect to the norm topology.
\end{thm}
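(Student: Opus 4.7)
The plan is to follow the classical Chevalley--Artin route as presented in \cite{Neukirch}, organizing the proof into four stages: patching the local reciprocity maps into a global one, establishing the two inequalities for cyclic extensions, deducing Artin reciprocity, and proving the existence theorem together with the topology comparison.

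First, for each prime $\p$ of $k$, local class field theory provides $\rho_{k_\p}:k_\p^\times\to\Gal(k_\p^{\rm ab}/k_\p)$; after choosing an embedding $k^{\rm ab}\inj \ol{k_\p}$ we obtain a map $k_\p^\times\to\Gal(k^{\rm ab}/k)$. The formula $\rho_k((a_\p))=\prod_\p \rho_{k_\p}(a_\p)$ defines a map on $I_k$ because, for any finite abelian $F/k$, almost all primes $\p$ are unramified in $F$, so $\rho_{k_\p}$ kills $\mca{O}_\p^\times$ modulo $\Gal(F/k)$; hence the product has finitely many nontrivial factors in each finite quotient. This yields a continuous $\rho_k:I_k\to\Gal(k^{\rm ab}/k)$ for which the local compatibility (ii) holds by construction.

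The heart of (1) is Artin reciprocity, $\rho_k(P_k)=\{1\}$. I would reduce to the case of cyclic $F/k$ and invoke the standard two-inequality package: the \emph{first inequality} $[C_k:N_{F/k}C_F]\geq [F:k]$ via analytic methods ($L$-series estimates and Chebotarev density), and the \emph{second inequality} $[C_k:N_{F/k}C_F]\leq [F:k]$ via the Herbrand quotient computation $h(\Gal(F/k),C_F)=[F:k]$, which reduces through the exact sequence defining $C_F$ to the $S$-unit theorem. The vanishing of $\rho_k$ on principal id\`eles is then verified directly on cyclotomic extensions and propagated via Kronecker--Weber (or, alternatively, obtained cohomologically from the fundamental class in $H^2(\Gal(F/k),C_F)$). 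Combining these with local surjectivity of $\rho_{k_\p}$ yields (i). This step is the main obstacle I anticipate: the vanishing on $P_k$ is the deepest input and classically requires substantial analytic or cohomological work.

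For the existence theorem (2), I would show that every open subgroup $\mca{N}<C_k$ of finite index in the standard topology contains some norm subgroup $N_{F/k}C_F$. Pass to $k'=k(\zeta_n)$ with $n=[C_k:\mca{N}]$, use Kummer theory on $k'$ to realize the pull-back of $\mca{N}$ as the norm group of an explicit Kummer extension $F'/k'$, and descend by taking $F$ to be the fixed field in $F'$ of an appropriate subgroup of $\Gal(F'/k)$; injectivity of $F\mapsto N_{F/k}C_F$ is immediate from (1)(i). For the ``moreover'' clause, each $N_{F/k}C_F$ is open of finite index in the standard topology by continuity of the norm map combined with (1)(i), so every norm-open subgroup is standard-open of finite index; the reverse inclusion is precisely what (2) delivers, completing the equivalence of the two classes of subgroups.
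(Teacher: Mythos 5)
You should first be aware that the paper does not prove this statement at all: Theorem \ref{global CFT} is quoted as classical background, with the proof delegated to \cite{Neukirch} (Ch.~VI, Theorem 6.1) and \cite{KKS2}; the only proof-adjacent remark in the paper is that the norm residue symbol $(\ ,F/k)$ satisfies $\Ker(\ ,F/k)=N_{F/k}(C_F)$. So your outline is really a reconstruction of the cited classical proof, and its overall architecture --- patching the local maps $\rho_{k_\p}$, the two norm-index inequalities for cyclic $F/k$, deducing Artin reciprocity, then the Kummer-theoretic existence theorem and the topology comparison --- is the correct one.

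However, one named step is wrong as stated: you have crossed the wires between the two inequalities and the methods that prove them. The Herbrand-quotient computation $h(G,C_F)=[F:k]$ gives $[C_k:N_{F/k}C_F]=|\wh{H}^0(G,C_F)|=[F:k]\cdot|\wh{H}^{-1}(G,C_F)|\geq [F:k]$, i.e.\ only the \emph{lower} bound (the ``first inequality'' in modern terminology); it cannot produce your claimed upper bound $\leq[F:k]$. Conversely, the analytic argument (simple pole of $\zeta_F$ at $s=1$ and $L(1,\chi)\neq 0$) yields the \emph{upper} bound, not the lower one. As written, each of your two steps asserts exactly the bound its tool cannot deliver; the package is repaired by swapping the attributions, or by replacing the analytic input with Chevalley's Kummer-theoretic proof of the second inequality, which is the route actually taken in \cite{Neukirch}. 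Three smaller cautions: invoking Chebotarev density here risks circularity, since classically it is deduced from this very theorem (the inequality argument needs only the elementary $L$-series estimates); Kronecker--Weber covers only $k=\Q$, so propagating reciprocity to general $k$ requires Artin's reduction lemma to cyclotomic extensions (your parenthetical fundamental-class alternative is the cleaner fix); and in (2), openness of $N_{F/k}(C_F)$ in the standard topology does not follow from continuity of the norm map plus finite index --- finite-index subgroups need not be open --- but from the local existence theorem, i.e.\ openness of the local norm groups $N_{F_\P/k_\p}(F_\P^\times)$, which forces $N_{F/k}(C_F)$ to contain an open subgroup of $C_k$.
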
 

In the proof, we use the {\it norm residue symbol} $(\ ,F/k): C_k\surj \Gal(F/k)$. 
For this map, we have $\Ker (\ ,F/k)=N_{F/k}(C_F)$.


\section{The global reciprocity law}

In this section, we recall the local theory, 
\orange{develop} the id\`elic class field theory for 3-manifolds 
\orange{with a new definition of the principal id\`ele group}, 
and present the global reciprocity law over a 3-manifold equipped with a 
link. 
\orange{We generalize the main result of the previous paper \cite{Niibo1}, as well as 
suggest an expansion of the M$^2$KR-dictionary.}

\subsection{Local theory}
Let $K$ be a knot in its tubular neighborhood $V_K$. 
In our context, the local theory for 3-manifolds is nothing but the Galois theory for the covers of $\del V_K$, 
which is dominated by an abelian group $\pi_1(\del V_K)=\langle \mu_K,\lambda_K \mid [\mu_K,\lambda_K] \rangle \cong H_1(\del V_K) \cong \Z^2$. 
(In a sense, the tame case of a local field is a ``quantized'' version of this case.) 
For each manifold $X$, let $\Gal(\wt{X}/X)$ denote the Galois group of the universal cover. 
We have the following commutative diagram of exact sequences. 
$$
\xymatrix{
0\ar[r] &\langle \mu_K\rangle \ar[r] \ar[d] & H_1(\del V_K) \ar[r] ^{v_K} \ar[d]^{{\rm Hur.}}&H_1(V_K)=\langle\lambda_K\rangle \ar[r] \ar[d] &0\\%
0\ar[r] &\Gal(\wt{\del V_K}/\del\wt{V_K})\ar[r] &\Gal(\wt{\del V_K}/\del V_K) \ar[r] &\Gal(\wt{V_K}/V_K) \ar[r] &0}$$
By an isomorphism $\del_*: H_2(V_K,\del V_K)=\langle[D_K]\rangle \congto \langle\mu_K\rangle$, classes of meridian disks $D_K$ can be seen as analogues of local units. \red{(An analogue of the unit group of a number field is considered as a surface-related object, while there are several variations of dictionary about units.)} 
The map $v_K$ is the one induced by the natural injection $\del V_K\inj V_K$, which is an analogue of the valuation map $v_\p$ in number theory. 
The vertical maps are \red{(the inverses of)} the Hurewicz isomorphisms. In the lower line, $I_K:=\Gal(\wt{\del V_K}/\del\wt{V_K})$ is the inertia group, and we have $\Gal(\wt{V_K}/V_K) \cong \Gal(\wt{K}/K)\cong \Z$. 
\subsection{Definitions}
Let $M$ be a closed, oriented, connected 3-manifold. 
Let $\mca{K}$ be a link in $M$ 
\magenta{consisting of countably many tame components with a (the) formal} tubular neighborhood $V_\mca{K}=\sqcup_{K\subset \mca{K}} V_K$.  For a sublink $L$ of $\mca{K}$,  
\magenta{we put $V_L=\sqcup_{K\subset L} V_K$}. 

\begin{defn}[id\`ele group] 
We define the {\it id\`ele group} of $(M,\mca{K})$ by the 
restricted product of $H_1(\del V_K)$ with respect to the subgroups $\(\langle \mu_K\rangle\)_{K\subset \mca{K}}=\(\Ker(v_K)\)_{K\subset \mca{K}}$: 
\[ \ds I_{M,\mca{K}}:=
\restprod_{K\subset \mca{K}}H_1(\del V_K)=
\((a_K)_K\in \prod_{K\subset \mca{K}}H_1(\del V_K)\ \middle| \ 
v_{K}(a_K)=0\\ {\rm\ for\ almost\ all\ }K\).\] 
\end{defn}
\red{
This is the restricted product with respect to the local topology on $H_1(\del V_K)$ (see Section \ref{standard}) and the family of open subgroups $\{\langle\mu_K\rangle <H_1(\del V_K)\}_K$.}\\ 




\cyan{
The set of finite sublinks of $\mca{K}$ is a directed ordered set with respect to the inclusions. 
In addition, if we take an inclusion sequence $\cdots \subsetneq  L_i\subsetneq L_{i+1}\subsetneq \cdots$ of finite sublinks of $\mca{K}$ indexed by $i\in \N$, then $\mca{K}=\cup_i L_i$ and any finite sublink $L$ of $\mca{K}$ is contained in some $L_i$.
} 

\cyan{
For each finite sublink $L$ of $\mca{K}$, we put $X_L=M-L$. Then $H_1(X_L)$'s form an inverse system indexed by $L\subset \mca{K}$ with respect to the natural surjections induced by the inclusion maps of the exteriors. By $\varprojlim_L H_1(X_L)\cong \varprojlim_{i\in \N} H_1(X_{L_i})$, the natural projection $\varprojlim_L H_1(X_L)\to H_1(X_L)$ is surjective. 
}

\cyan{
If we put $X_\mca{K}=M-\mca{K}$, then we have $H_1(X_\mca{K})\cong \varprojlim_L H_1(X_L)$. 
Indeed, there is the Milnor exact sequence $0\to \varprojlim_L^1 H_2(X_L)\to H_1(X_\mca{K})\to \varprojlim_L H_1(X_L)\to 0$ (\cite{Milnor1962ax}). Since $\(H_2(X_L)\)_L$ is a surjective system and satisfies the Mittag-Leffler condition, we have $\varprojlim_L^1 H_2(X_L)=0$.
}  

\cyan{
For each $L$, 
let $\Gal(X_{L}^{\rm ab}/X_{L})$ denote the Galois group of the maximal abelian cover over its exterior $X_L$. Then $\Gal(X_{L}^{\rm ab}/X_{L})$'s form an inverse system in a natural way. 
We put 
$\Gal (M,\mca{K})^{\rm ab}:=\varprojlim_L \Gal(X_{L}^{\rm ab}/X_{L})$ and regard it as an analogue of $\Gal(k_{\rm ab}/k)$. 
We have $\ds \Gal (M,\mca{K})^{\rm ab} \underset{\rm Hur}{\cong} H_1(X_\mca{K})$. 
}


For a knot $K$ and a finite link $L$ in $\mca{K}$, 
\magenta{take an ambient isotopy $h$ fixing $K$ and $L$ so that $h(V_K)\subset X_L$ if needed. 
Then the composite $\del V_K\congto h(\del V_K)\inj X_L$ with the inclusion induces a natural map $H_1(\del V_K)\to H_1(X_L)$ commuting with the the Hurewicz maps.} 
\[
\xymatrix{
H_1(\del V_K) \ar[d] \ar[r]^{\cong \ \ \ \ }_{\rm Hur \ \ \ \ } \ar@{}[dr]|\circlearrowleft 
& \Gal(\wt{\del V_K}/\del V_K) \ar[d] \\
H_1(X_L) \ar[r]^{\cong \ \ \ \ }_{\rm Hur \ \ \ \ }  & \Gal(X_{L}^{\rm ab}/X_L)}
\] 
Let $\rho_{K,L}: H_1(\del V_K)\to \Gal(X_L^{\rm ab}/X_L)$ denote their composite, 
and \magenta{we consider the map} $\ds \rho_L: I_{M,\mca{K}}\to \Gal(X_{L}^{\rm ab}/X_L): (a_K)_K\mapsto \sum_{K\subset\mca{K}} \rho_{K,L}(a_K)$ \magenta{where $K$ runs through all the knot in $\mca{K}$.} 
This sum makes sense, because it is actually a finite sum for each $(a_K)_K\in I_{M,\mca{K}}$, by the definition of the restricted product.
Since $(\rho_L)_L$ is compatible with the inverse system, the following homomorphism is induced: 
$$\cyan{\wt{\rho}_{M,\mca{K}}}:I_{M,\mca{K}}\to \Gal(M,\mca{K})^{\rm ab}.$$

If $\mca{K}$ is an admissible link, \magenta{then} this map is surjective.\\ 


We give a new definition of the principal id\`ele group and id\`ele class group \fgreen{by introducing the natural homomorphism $\Delta: H_2(M,\mca{K})\to I_{M,\mca{K}}$} in the following. 

Let $L$ and $L'$ be finite sublinks of $\mca{K}$ with $L\subset L'$. 
Then the natural surjection $j:C_*(M,L)\surj C_*(M,L')$ induces the natural injection $j_*:H_2(M,L)\inj H_2(M,L')$. 
\fgreen{We have the natural isomorphism $H_2(M,\mca{K})\congto \varinjlim_{L\subset \mca{K}}H_2(M,L)$, }
where $L$ runs through all the finite sublinks of $\mca{K}$ and the transition maps are the natural map $j_*$'s. 
\fgreen{
In order to prove this, we use the Sielpi\'{n}ski theorem (\cite[Theorem 6.1.27]{EngelikingGT1989}): 
If a compact Hausdorff connected space $X$ and a countable family $\{X_i\}_{i\in \N}$ of pairwise disjoint closed subsets satisfy $X=\cup_i X_i$, then at most one of $X_i$ is non-empty. 
By virtue of this theorem, the singular chain groups satisfy $C_n(\mca{K})=\varinjlim_{L\subset \mca{K}} C_n(L)$ for each $n\in \N$. 
The exact sequence $0\to C_n(L) \to C_n(M) \to C_n(M,L)\to 0$ yields 
the exact sequence $0\to C_n(\mca{K}) \to C_n(M) \to \varinjlim_{L\subset \mca{K}} C_n(M,L)\to 0$. 
The exact sequence $0\to C_n(\mca{K}) \to C_n(M) \to C_n(M,\mca{K})\to 0$ induces the natural isomorphism 
$C_n(M,\mca{K})\congto \varinjlim_{L\subset \mca{K}} C_n(M,L)$. 
By taking the long exact sequences and using the five lemma, we obtain the natural isomorphism $H_n(M,\mca{K})\congto \varinjlim_{L\subset \mca{K}} H_n(M,L)$.}

\cyan{
For each finite sublink $L$ of $\mca{K}$, let $V'_{L}$ be a (usual) tubular neighborhood of $L$ and put $X_{L}^\circ=M-{\rm Int}(V'_{L})$. 
The inclusions $(M,L)\inj (M,V'_L)$ and $(X_L^\circ,\del X_L^\circ)\inj (M,V'_L)$ induce isomorphisms $H_2(M,L) \cong H_2(M,V'_L) \cong H_2(X_L^\circ,\del X_L^\circ)$.
We denote by $\del_L$ the homomorphism $H_2(M,L)\to H_1(\del V_L)$ given as the composite of
$\del_*: H_2(M,L) \cong H_2(X_L^\circ,\del X_L^\circ)\to H_1(\del X_L^\circ)$ and a natural
isomorphism $H_1(\del X_L^\circ)=H_1(\del V'_L)\congto H_1(\del V_L)$. 
We also consider the homomorphism $H_1(\del V_L)\congto H_1(\del V'_L) \to H_1(X_L)$.} 
%
For each finite sublinks $L$ and $L'$ of $\mca{K}$ with $L\subset L'$, there is a commutative diagram 
{\small 
$$\xymatrix{
H_2(M,L') \ar@{->>}[r]^{\del_{L'}} & H_1(\del V_{L'}) \ar@{->>}^{{\rm pr}}[d]\\
H_2(M,L)  \ar@{->>}[r]^{\del_L} \ar@{^{(}->}[u]_{j_*} \ar@{}[ru]|{\circlearrowright}
&H_1(\del V_L)}$$}
%
%
%
where ${\rm pr}$ denotes the projection to the $L$-components. 
Thus a natural map from 
$\varinjlim_{L\subset \mca{K}} H_2(M,L)$ to $\varprojlim_{L\subset \mca{K}}H_1(\del V_L)=\prod_{K\subset \mca{K}} H_1(\del V_K)$ 
is induced. 
Since longitudinal component does not added by $j_*$, the image of this map is included in $I_{M,\mca{K}}$. 
\fgreen{Thus we obtain the natural homomorphism $\Delta:H_2(M,\mca{K})\to I_{M,\mca{K}}$. } 
\magenta{If $M$ is a $\Q$HS$^3$, then $\del_L$ is injective for each finite sublink $L$ of $\mca{K}$, and hence so is $\Delta$.}

\begin{defn}[Principal id\`ele group, id\`ele class group] 
We define \emph{the principal id\`ele group} by $P_{M,\mca{K}}:=\Im (\Delta:\fgreen{H_2(M,\mca{K})}\to I_{M,\mca{K}})$, and \emph{the id\`ele class group} by $C_{M,\mca{K}}:=I_{M,\mca{K}}/P_{M,\mca{K}}$. 
\end{defn} 




\orange{The following assertion tells that our $P_{M,\mca{K}}$ coincides with that of \cite{Niibo1}, 
and implies the existence of the global reciprocity map $\rho_{M,\mca{K}}$ in Theorem \ref{global CFT for mfd}. 
It gives a topological interpretation of $\Ker \wt{\rho}_{M,\mca{K}}$ and 
guarantees the correctness 
of an analogue of the Artin's reciprocity law. }

\begin{thm} \label{Pidele}
\violet{The equation $P_{M,\mca{K}}=\Ker \wt{\rho}_{M,\mca{K}} $ holds, and $\wt{\rho}_{M,\mca{K}}$ induces a} natural isomorphism 
$\rho_{M,\mca{K}}:C_{M,\mca{K}}\congto \Gal(M,\mca{K})^{\rm ab}$. 
\end{thm}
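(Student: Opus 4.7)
The plan is to prove both $\Im \Delta = \Ker \wt{\rho}_{M,\mca{K}}$ and the surjectivity of $\wt{\rho}_{M,\mca{K}}$, from which $\rho_{M,\mca{K}}$ as an isomorphism follows. The main local tool is the exact sequence
$H_2(M, L) \overset{\del_L}{\to} H_1(\del V_L) \overset{i_{L*}}{\to} H_1(X_L)$
valid for each finite sublink $L \subset \mca{K}$, which is obtained from the long exact sequence of the pair $(X_L^\circ, \del X_L^\circ)$ together with the excision isomorphism $H_2(M,L) \cong H_2(X_L^\circ, \del X_L^\circ)$ already fixed in the discussion preceding the theorem.

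The inclusion $\Im \Delta \subset \Ker \wt{\rho}_{M,\mca{K}}$ is straightforward: given $s \in H_2(M,\mca{K})$ represented by $s_L \in H_2(M,L)$, for any finite $L' \supset L$ the $L'$-components of $\Delta(s)$ coincide with $\del_{L'}(j_*(s_L))$ by the commutative diagram defining $\Delta$, hence lie in $\Ker i_{L'*}$, while the components at $K \not\subset L'$ are purely meridional by the linking argument and are annihilated by $H_1(\del V_K) \to H_1(X_{L'})$ since $\mu_K$ bounds a meridian disk inside $V_K \subset X_{L'}$. Passing to the inverse limit gives $\wt{\rho}_{M,\mca{K}}(\Delta(s)) = 0$.

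For the reverse inclusion, take $(a_K)_K \in \Ker \wt{\rho}_{M,\mca{K}}$ and let $L_0$ be the finite set of knots on which the longitudinal part is nonzero. For each finite $L \supset L_0$, the equality $\rho_L((a_K)_K) = 0$ reduces to $i_{L*}((a_K)_{K \subset L}) = 0$, so by exactness there is some $s_L \in H_2(M,L)$ with $\del_L(s_L) = (a_K)_{K \subset L}$. The crucial remaining step is that $s_L$ can be chosen so that $\Delta(s_L)$ also matches $a_K$ at $K \not\subset L$. Applying the $\Ker \wt{\rho}$-condition at $L' = L \cup \{K\}$ and using the short exact sequence
$0 \to \langle [\mu_K]_{X_{L'}} \rangle \to H_1(X_{L'}) \to H_1(X_L) \to 0$
obtained from the Mayer--Vietoris decomposition $X_L = X_{L'} \cup V'_K$, the meridional coefficient $d_K$ of $a_K$ is forced to equal the linking sum $\sum_{K' \subset L} n_{K'} \, {\rm lk}(K',K)$ modulo torsion; by the standard identification of this linking sum with the intersection number of a 2-chain representing $s_L$ with a meridian disk of $K$, this quantity is also the meridional coefficient of $\Delta(s_L)_K$. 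Enlarging $L'$ within $\mca{K}$ and exploiting very admissibility eliminates the residual torsion in $[\mu_K]_{X_{L'}}$, yielding the equality.

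For surjectivity of $\wt{\rho}_{M,\mca{K}}$, we use the identification $\Gal(M,\mca{K})^{\rm ab} \cong H_1(X_\mca{K})$. A class $\sigma = [c]$ represented by a finite $1$-cycle $c$ in $X_\mca{K}$ has image $\sum_K n_K [K]$ in $H_1(M)$ by admissibility, and the residue $[c] - \sum_K n_K [\lambda_K]$ lies in $\Ker(H_1(X_\mca{K}) \to H_1(M))$, which by the Lefschetz/excision computation applied to the pair $(M, X_\mca{K})$ is generated by the meridians $[\mu_K]$. Writing the residue as a finite sum $\sum m_K [\mu_K]$, the idele $(n_K \lambda_K + m_K \mu_K)_K$ then maps to $\sigma$. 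The main obstacle in the whole plan is the linking/torsion matching argument in the reverse inclusion above, since it requires coordinating between different finite sublinks and using very admissibility of $\mca{K}$ to neutralize potential torsion of $[\mu_K]$ in small $H_1(X_{L'})$.
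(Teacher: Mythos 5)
Your overall architecture agrees with the paper's: prove $\Im\Delta=\Ker\wt{\rho}_{M,\mca{K}}$ and combine it with surjectivity of $\wt{\rho}_{M,\mca{K}}$. The easy inclusion $\Im\Delta\subset\Ker\wt{\rho}_{M,\mca{K}}$ and the surjectivity argument are essentially sound. The genuine gap sits exactly at the step you yourself call crucial: matching the meridional coefficients $d_K$ of $(a_K)$ and of $\Delta(s_L)$ at knots $K\not\subset L$. First, ${\rm lk}(K',K)$ is not defined in a general closed oriented $3$-manifold, so the formula $d_K=\sum_{K'\subset L}n_{K'}\,{\rm lk}(K',K)$ ``modulo torsion'' is not a usable statement. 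Second, the claim that ``enlarging $L'$ within $\mca{K}$ and exploiting very admissibility eliminates the residual torsion in $[\mu_K]_{X_{L'}}$'' does not work: very admissibility concerns generation of $H_1$ of branched covers by preimage components and has no bearing on the order of $[\mu_K]$ in $H_1(X_{L'})$; moreover, enlarging $L'$ injects new unknown meridional coefficients into the single relation $\sum_{K''\subset L'}[a_{K''}]_{X_{L'}}=0$, so $d_K$ is not isolated. The order of $[\mu_K]$ in $H_1(X_{L\cup K})$ is controlled by the image of the intersection pairing $H_2(M)\to \Z^{L\cup K}$, and it can be finite or even zero for your choice of $L$: you take $L_0$ to be only the support of the longitudinal part, so for a purely meridional id\`ele in $M=S^1\times S^2$ one may have $L=\emptyset$ and $K=S^1\times\{pt\}$, where $[\mu_K]=0$ in $H_1(X_K)$ because $\{pt\}\times S^2$ meets $K$ once. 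In that situation your constraint says nothing about $d_K$.

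The missing ingredient is the one the paper uses: by admissibility, enlarge $L$ so that its components generate $H_1(M)$. Then for $L'\supset L$ the difference $c=b'-b$ of the two candidates in $H_1(\del V_{L'})$ is a combination of meridians of $L'-L$ that dies in $H_1(X_{L'})$, hence equals $\del_{L'}[C]$ for a class $[C]$ represented by an absolute $2$-cycle (since $c$ has no longitudinal part, $\del_*[C]=0$ in $H_1(L')$). Because $\del_L[C]=0$, the intersection number $I([C],[K_i])$ vanishes for every component $K_i$ of $L$; since these generate $H_1(M)$ and the intersection form $H_2(M)\times H_1(M)\to\Z$ is right-nondegenerate with $H_2(M)$ torsion-free, $[C]=0$ and $c=0$. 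Equivalently, once $L$ generates $H_1(M)$, the class $[\mu_K]$ automatically has infinite order in $H_1(X_{L\cup K})$, which is exactly what your coefficient comparison needs --- but this comes from admissibility plus Poincar\'e duality, not from very admissibility, and your proposal never imposes that $L$ generate $H_1(M)$. With that modification your knot-by-knot comparison can be repaired, but as written the argument has a hole precisely where the theorem's content lies.
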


\begin{proof} 
The assertion $\Im \Delta \subset \magenta{\Ker \wt{\rho}_{M,\mca{K}}}$ holds in a natural way. 
Indeed, for any $x\in S_{M,\mca{K}}$, there is some $L_0\subset \mca{K}$ and some $x_0 \in H_2(M,L_0)$ such that $x$ is the image of $x_0$ under the natural map $j:H_2(M,L_0)\inj \fgreen{H_2(M,\mca{K})}$. 
For any finite link $L$ with $L_0\subset L\subset \mca{K}$, there is a commutative diagram 
{\small 
$$\xymatrix{
\fgreen{H_2(M,\mca{K})} \ar[r]^{\Delta} & I_{M,\mca{K}} \ar[r]^{\magenta{\wt{\rho}_{M,\mca{K}}}} \ar@{->>}[d] & \magenta{\Gal(M,\mca{K})^{\rm ab}} \ar[r]  \ar@{->>}[d] & 0\\ 
H_2(\magenta{M,L}) \ar@{^{(}->}[u]_j \ar[r]^{\del_{L}} & H_1(\del \magenta{V_{L}}) \ar[r] & H_1(X_{L}) \ar[r] & 0
}$$}
and the image of $x_0$ in $H_1(X_L)$ is zero. Thus the image of $x$ in $\Gal(M,\mca{K})^{\rm ab}$ is zero, and $\Delta(x)\in \Ker \cyan{\wt{\rho}_{M,\mca{K}}}$ holds. 

We prove $\Ker \cyan{\wt{\rho}_{M,\mca{K}}}\subset \Im \Delta$ in the following. Let $(a_K)\in \Ker \cyan{\wt{\rho}_{M,\mca{K}}}$. Then there is a finite sublink $L\subset \cyan{\mca{K}}$ such that the longitudinal component of $(a_K)$ is zero outside $L$ and that components of $L$ generates $H_1(M)$. 
Let $a$ denote the image of $(a_K)$ in $H_1(\del V_L)$. 
\cyan{The image of $a$ in $H_1(X_L)$ coincides \orange{with} that of $(a_K)$ and hence it is zero. By}
the exact sequence $H_2(\magenta{M,L})\to H_1(\del \magenta{V_{L}})\to H_1(X_{L})\to 0$, 
there is some $A\in H_2(M,L)$ with $\del A=a$. We put $(a'_K)=\Delta(j(A))$. Then it is sufficient to prove $(a_K)=(a'_K)$. 

Let $L'$ be any finite link with $L\subset L'\subset \mca{K}$, and let $b$ and $b'$ denote the images of $(a_K)$ and $(a'_K)$ in $H_1(\del \magenta{V_{L'}})$ respectively. Then it is sufficient to prove $b=b'$. 
Note that $b'$ is the image of $A$ under $H_2(\magenta{M,L})\overset{j_*}{\to} H_2(\magenta{M,L'})\overset{\del_{L'}}{\to}H_1(\del \magenta{V_{L'}})$. 
Now $b$ and $b'$ are both included in $H_1(\magenta{\del V_L})\oplus \fgreen{\langle\mu_K\rangle _{K\subset L'-L}}$, their images in $H_1(X_{L'})$ are zero, and their images in $H_1(\del V_L)$ are $a$.

{\small 
$$\xymatrix{
H_2(M,L') \ar@{->>}[r]^{\del_{L'}} & H_1(\del V_{L'}) \ar[r] \ar@{->>}^{{\rm pr}}[d]  & H_1(X_{L'}) \ar@{->>}^{\iota_*}[d]\\
H_2(M,L)  \ar@{->>}[r]^{\del_L} \ar@{^{(}->}_{j_*}[u] 
& H_1(\del V_L) \ar[r] & H_1(X_L)}$$}

We put $c=b'-b$. Then we have $c\in \langle\mu_{L'-L}\rangle$. 
\cyan{We regard $Z_2(M,L)$ and $Z_2(M,L')$ as subgroups of $C_2(M)$ with $Z_2(M)\subset Z_2(M,L)\subset Z_2(M,L')\subset C_2(M)$, and denote by $\del$ the boundary map on $C_*(M)$.}  
Since the image of $c$ in $H_1(X_{L'})$ is zero, there is some $C\in Z_2(M,L')$ with $\del_{L'}([C])=c$. 

\cyan{Let $V'_{L'}$ be a (usual) tubular neighborhood of $L'$. Then $\del_*:H_2(M,L')\to H_1(L')$ factors as $H_2(M,L')\overset{\del_{L'}}{\to} H_1(\del V_{L'})\congto H_1(\del V'_{L'})\surj H_1(V'_{L'})\congto H_1(L')$ with $\fgreen{\langle\mu_K\rangle _{K\subset L'}}=\Ker (H_1(\del V_{L'})\to H_1(L'))$. 
Since $\del_{L'}([C]) \in \fgreen{\langle\mu_K\rangle _{K\subset L'-L}}$, we have $\del_*[C]=0$, and we can regard $C\in Z_2(M)$. }  

Let $I:H_2(M)\times H_1(M)\to \Z$ denote \emph{the intersection form} of $M$. 
It is a bilinear form 
defined by counting the intersection points of transversely intersecting representatives with signs. 
By the universal coefficient theorem, $H_2(M)$ is torsion-free, and $I$ is right-non-degenerate.  

Now $H_1(M)$ is generated by components of $L$ by assumption. 
Since $\del_L'([C])\in \mu_{L'-L}$, we have $\del_L([C])=0$ by regarding $C\in Z_2(M,L)$, 
and each component $K_i$ of  $L$ satisfies $I([C],[K_i])=0$. 
This implies $[C]=0$ and hence $c=\del_{L'}([C])=0$. 
Therefore we have $b=b'$, and $\Delta:\fgreen{H_2(M,\mca{K})}\to \magenta{\Ker \cyan{\wt{\rho}_{M,\mca{K}}}}$ is a surjection. 
\end{proof}

\orange{
Theorem \ref{Pidele} expands the M$^2$KR-dictionary as follows,} 
where 
$k$ is a number field. 



\begin{center} \begin{tabular}{|c||c|}
\hline 
1-cycle group $Z_1(M)$& ideal group $I(k)$\\ 
\hline 
 $\del: C_2(M)\to Z_1(M); s\mapsto \del s$ & $(\ ): k^\times\to I(k); a\mapsto (a)$ \\ 
\hline 
1-boundary group $B_1(M):=\Im \del $& principal ideal group $P(k):=\Im \del $\\
\hline 
\magenta{$H_1(M):= Z_1(M)/B_1(M)$} & \magenta{$\Cl(k):=I(k)/P(k)$} \\ 
\hline 
\hline 
\orange{id\`ele group} $I_{M,\mca{K}}$ & id\`ele group $I_k$\\ 
\hline 
$\Delta: \fgreen{H_2(M,\mca{K})}\to I_{M,\mca{K}}$ & $\Delta:k^\times \to I_k$ \\ 
\hline 
principal id\`ele group $P_{M,\mca{K}}:=\Im \Delta$ & 
principal id\`ele group $P_k:=\Im \Delta$\\ 
\hline 
id\`ele class group $C_{M,\mca{K}}:=I_{M,\mca{K}}/P_{M,\mca{K}}$ & 
id\`ele class group $C_k:=I_k/P_k$ \\
\hline 
\end{tabular} \end{center}  

\ 



\orange{
Let} $h:N\to M$ be a finite 
branched cover branched over a finite link $L$ in $\mca{K}$. 
Then the preimage $h^{-1}(\mca{K})$ of $\mca{K}$ is a link in $N$, and the covering map $h$ induces 
the {\it norm maps} 
$h_{*}:I_{N,h^{-1}(\mca{K})}\to I_{M,\mca{K}}$,
$h_{*}:P_{N,h^{-1}(\mca{K})}\to P_{M,\mca{K}}$, and
$h_{*}:C_{N,h^{-1}(\mca{K})}\to C_{M,\mca{K}}$. 
They satisfy the transitivity (functoriality) in a natural way.  
\orange{If $\mca{K}$ is very admissible, then so is $h^{-1}(\mca{K})$.}  

\subsection{The global reciprocity law}
Here is the first part of \emph{the id\`elic global class field theory for 3-manifolds and admissible links}, which is the counter part of \textbf{Theorem 3.1} (1). 

\begin{thm}[The global reciprocity law for 3-manifolds]
\label{global CFT for mfd} 
Let M be a closed, oriented, connected 3-manifold  
equipped with a very admissible link $\mca{K}$. 
Then, there is a canonical isomorphism called the \emph{global reciprocity map} 
$$\rho_{M,\mca{K}}:C_{M,\mca{K}} \congto \Gal(M,\mca{K})^{\rm ab}$$ which satisfies the following properties: 

\noindent {\rm (i)} For any finite abelian cover $h:N \to M$ branched over a finite link $L$ in $\mca{K}$, $\rho_M$ induces an isomorphism
$$C_{M,\mca{K}}/h_{*}(C_{N,h^{-1}(\mca{K})})\cong \Gal(h).$$ 

\noindent {\rm (ii)} For each knot $K$ in $\mca{K}$, we have the following commutative diagram:  
$$\xymatrix{
H_1(\del V_K) \ar[d] \ar[r]^{\cong \ \ \ \ }_{\rm Hur \ \ \ \ }\ar@{}[dr]|\circlearrowleft & \Gal(\wt{\del V_K}/\del V_K) \ar[d] \\
C_{M,\mca{K}} \ar[r]_{\rho_{M,\mca{K}} \ \ \ \ } & \ds \Gal(M,\mca{K})^{\rm ab} ,
}$$
where the vertical maps are induced by the natural inclusions.
\end{thm}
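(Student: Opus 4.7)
The plan is to deduce the theorem from Theorem \ref{Pidele}, applied to both $(M,\mca{K})$ and $(N,h^{-1}(\mca{K}))$, combined with functoriality of $\rho$ under $h$ and the abelian Galois correspondence at finite levels.

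The canonical isomorphism $\rho_{M,\mca{K}}:C_{M,\mca{K}}\congto \Gal(M,\mca{K})^{\rm ab}$ exists by Theorem \ref{Pidele}, which identifies $\Ker \wt{\rho}_{M,\mca{K}}$ with $P_{M,\mca{K}}$. Surjectivity relies on the very admissibility of $\mca{K}$: for every finite sublink $L\subset \mca{K}$, the meridians of components of $\mca{K}\setminus L$ generate $H_1(X_L)$ under the inclusion-induced maps $H_1(\del V_K)\to H_1(X_L)$, so each level map $\rho_L$ is surjective; passing to the inverse limit along the surjective transition maps of $\{H_1(X_L)\}_L$ preserves surjectivity. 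Property (ii) follows directly from the construction of $\wt{\rho}_{M,\mca{K}}$: the natural inclusion $H_1(\del V_K)\inj I_{M,\mca{K}}$ concentrates an id\`ele at $K$, and $\wt{\rho}_{M,\mca{K}}$ sends it to the compatible family over $L\subset \mca{K}$ of images in $H_1(X_L)\cong \Gal(X_L^{\rm ab}/X_L)$, which under the Hurewicz identifications is precisely the local-to-global composition appearing in the square.

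For part (i), construct the surjection $\Gal(M,\mca{K})^{\rm ab}\surj \Gal(h)$ via the abelian Galois correspondence. Since $h$ is abelian and unramified away from $L$, the restriction $X_{h^{-1}(L)}\to X_L$ sits intermediate in $X_L^{\rm ab}\to X_L$, giving $\Gal(X_L^{\rm ab}/X_L)\surj \Gal(h)$; composing with $\Gal(M,\mca{K})^{\rm ab}\surj \Gal(X_L^{\rm ab}/X_L)$ produces the desired map. At each finite sublink $L'\supset L$, the same correspondence identifies $\Im(h_*:H_1(X_{h^{-1}(L')})\to H_1(X_{L'}))$ with $\Ker(H_1(X_{L'})\surj \Gal(h))$, since $X_{h^{-1}(L')}\to X_{L'}$ is an unramified cover with Galois group $\Gal(h)$ intermediate in $X_{L'}^{\rm ab}\to X_{L'}$. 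Passing to the inverse limit over $L'$ then yields $\Im(h_*:\Gal(N,h^{-1}(\mca{K}))^{\rm ab}\to \Gal(M,\mca{K})^{\rm ab})=\Ker(\Gal(M,\mca{K})^{\rm ab}\surj \Gal(h))$.

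Applying Theorem \ref{Pidele} to $(N,h^{-1}(\mca{K}))$---very admissible as noted just above---gives the isomorphism $\rho_{N,h^{-1}(\mca{K})}:C_{N,h^{-1}(\mca{K})}\congto \Gal(N,h^{-1}(\mca{K}))^{\rm ab}$. Functoriality $\rho_{M,\mca{K}}\circ h_*=h_*\circ \rho_{N,h^{-1}(\mca{K})}$ then yields $\rho_{M,\mca{K}}(h_*(C_{N,h^{-1}(\mca{K})}))=\Ker(\Gal(M,\mca{K})^{\rm ab}\surj \Gal(h))$, so $h_*(C_{N,h^{-1}(\mca{K})})$ equals the full preimage of this kernel under $\rho_{M,\mca{K}}$; quotienting produces the finite abelian group isomorphism $C_{M,\mca{K}}/h_*(C_{N,h^{-1}(\mca{K})})\cong \Gal(h)$ claimed in (i). The main technical obstacle is the inverse-limit step in the previous paragraph: commuting $\Im(h_*)$ with $\varprojlim$ requires the vanishing of $\varprojlim^1$ for the kernel system $\{\Ker(h_*:H_1(X_{h^{-1}(L')})\to H_1(X_{L'}))\}_{L'}$, which will be handled via the Mittag--Leffler condition using the surjectivity of transitions in $\{H_1(X_{h^{-1}(L')})\}_{L'}$ inherited from the very admissibility of $h^{-1}(\mca{K})$.
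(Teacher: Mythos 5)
Your proposal is correct in outline but proves part (i) by a genuinely different route from the paper. The paper stays at the level of id\`eles: it introduces the unit id\`ele group $U_{M,\mca{K}}$, proves as a lemma that $\Ker\bigl(\varphi_L:I_{M,\mca{K}}\surj H_1(X_L)\bigr)=P_{M,\mca{K}}+U_{{\rm non}L}$ (via Mayer--Vietoris, identifying $\Ker(H_1(X_{L'})\surj H_1(X_L))$ with the meridians of $L'-L$), and then computes directly $C_{M,\mca{K}}/h_*(C_{N,h^{-1}(\mca{K})})\cong I_{M,\mca{K}}/(P_{M,\mca{K}}+h_*(I_{N,h^{-1}(\mca{K})}))\cong H_1(X_L)/h_*(H_1(Y_L))\cong\Gal(h)$, using very admissibility only to get $I_{N,h^{-1}(\mca{K})}\surj H_1(Y_L)$. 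You instead push everything through the reciprocity isomorphisms for both $M$ and $N$ and identify $h_*(C_{N,h^{-1}(\mca{K})})$ with the preimage of $\Ker(\Gal(M,\mca{K})^{\rm ab}\surj\Gal(h))$ via functoriality. Your route is conceptually closer to the number-theoretic statement and isolates exactly where the Galois correspondence enters, but it forces you to commute $\Im(h_*)$ with $\varprojlim$, an issue the paper's finite-level computation avoids entirely; the paper's route also makes visible the weaker hypothesis on $\mca{K}$ recorded in Remark \ref{remonK}.

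Two points in your sketch need repair. First, the meridians of components of $\mca{K}\setminus L$ do \emph{not} generate $H_1(X_L)$: the meridian of a knot $K\not\subset L$ bounds its meridian disk inside $X_L$ and so dies in $H_1(X_L)$. What is true (and what surjectivity of $\rho_L$ actually needs) is that the images of the full groups $H_1(\del V_K)$, $K\subset\mca{K}$, generate $H_1(X_L)$: the meridians of $L$ generate $\Ker(H_1(X_L)\to H_1(M))$ and the longitude classes surject onto $H_1(M)$ by admissibility. Second, your Mittag--Leffler step is under-justified: surjectivity of the transitions in $\{H_1(Y_{L'})\}_{L'}$ (which is automatic from Mayer--Vietoris, not a consequence of very admissibility) does not by itself make the kernel system $\{\Ker(h_*|_{L'})\}$ Mittag--Leffler. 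You need the additional observation that for $L\subset L'\subset L''$ the cover is unramified over $L''-L'$, so each meridian $\mu_K$, $K\subset L''-L'$, is the image of a meridian upstairs; this lets you correct a lift of an element of $\Ker(h_*|_{L'})$ by meridians of $h^{-1}(L''-L')$ to land in $\Ker(h_*|_{L''})$, showing the kernel system is itself surjective and hence has vanishing $\varprojlim^1$. With these two repairs your argument goes through.
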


\begin{rem} 
This theorem \violet{refines and} generalizes the main result of \cite{Niibo1}. 
\violet{We have replaced the definition of $P_{M,\mca{K}}$ and hence that of $C_{M,\mca{K}}$. 
In addition, we have removed the assumption that $M$ is an integral homology 3-sphere, that is, $H_1(M)=0$.}   

Since $H_1(M)$ is an analogue of $\Cl(k)$ and $\Cl(k)$ is always finite, 
it may be interesting to restrict the class of $M$ to the rational homology 3-spheres, that is, $H_1(M)$ is finite. 
There are many results on the class number $\#\Cl(k)$ via id\`ele theory, 
and our theory may enable us to consider their analogues. \violet{(See Subsection \ref{appl to genus} also.)} 
\end{rem} 

\violet{The existence of $\rho_{M,\mca{K}}$ is done by Lemma \ref{Pidele}.} 
The compatibility with the local theories (ii) follows from the definition of the map. We 
give the proof of (i) in the following. 

\begin{defn}
We define the {\it unit id\`ele group} of $(M,\mca{K})$ by the meridian group 
\[ U_{M,\mca{K}}:=\((a_K)_K\in I_{M,\mca{K}}\ |\  v_K(a_K)=0{\rm \ in\ }H_1(V_K),{\rm \ for\ all\ }K{\rm \ in\ }\mca{K}\),\]
that is, a subgroup of the ``infinite linear combinations'' $\sum_{K\subset \mca{K}} m_K \mu_K$ $(m_K\in \Z)$ of the meridians of $\mca{K}$ with $\Z$-coefficients.
\end{defn}

\begin{lem}[An improvement of \cite{Niibo1} Proposition 5.7] 
Let $M$ be a closed, oriented, connected 3-manifold equipped with an admissible link $\mca{K}$, and $L$ be a finite link in $\mca{K}$. 
We write $U_{M,\mca{K}}=U_L \oplus U_{{\rm non} L}$, where $U_L$ is the subgroup generated by the meridians of $L$, and $U_{{\rm non} L}:=\Ker ({\rm pr}_L:U_{M,\mca{K}}\surj U_L)$. 
Then we have $I_{M,\mca{K}}/(P_{M,\mca{K}}+U_{{\rm non} L})\cong H_1(X_L)$. 

Especially, if we put $L=\phi$, we have $I_{M,\mca{K}}/(P_{M,\mca{K}}+U_{M,\mca{K}})\cong H_1(M)$. Moreover, if $M$ is an integral homology 3-sphere, we have $I_{M,\mca{K}}=P_{M,\mca{K}}\oplus U_{M,\mca{K}}$.
\end{lem}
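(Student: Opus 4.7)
The plan is to show that $\rho_L:I_{M,\mca{K}}\to \Gal(X_L^{\rm ab}/X_L)\cong H_1(X_L)$ (via the Hurewicz isomorphism) is surjective with $\Ker \rho_L = P_{M,\mca{K}}+U_{{\rm non}L}$; this yields the first assertion of the lemma. The admissibility of $\mca{K}$ makes $\wt{\rho}_{M,\mca{K}}$ surjective (as noted in Section 5), and the natural surjection $H_1(X_\mca{K})\twoheadrightarrow H_1(X_L)$ (coming from the long exact sequence of the pair $(X_L,X_\mca{K})$) then forces surjectivity of $\rho_L$. For the easy inclusion $P_{M,\mca{K}}+U_{{\rm non}L}\subset \Ker\rho_L$: the summand $P_{M,\mca{K}}$ lies in $\Ker\wt{\rho}_{M,\mca{K}}$ by Theorem \ref{Pidele}, while for each $K\not\subset L$ the meridian $\mu_K$ bounds a meridian disk in $V_K\subset X_L$, so every element of $U_{{\rm non}L}$ maps to $0$ in $H_1(X_L)$.

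For the reverse inclusion I would mimic the proof of Theorem \ref{Pidele}. Given $(a_K)\in \Ker \rho_L$, admissibility of $\mca{K}$ lets me choose a finite sublink $L_1$ with $L\subset L_1\subset \mca{K}$ such that the components of $L_1$ generate $H_1(M)$ and the longitudinal components of $(a_K)$ vanish outside $L_1$. Since $\rho_L$ factors through $\rho_{L_1}$ followed by the natural map $H_1(X_{L_1})\to H_1(X_L)$, the class $\rho_{L_1}((a_K))$ lies in the kernel of the latter. By the long exact sequence of the pair $(X_L,X_{L_1})$ together with excision on a tubular neighborhood of $L_1-L$, this kernel is generated by $\{[\mu_K]_{X_{L_1}}:K\in L_1-L\}$, so we may write $\rho_{L_1}((a_K))=\sum_{K\in L_1-L}c_K[\mu_K]_{X_{L_1}}$. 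Setting $u_1:=\sum_{K\in L_1-L}c_K\mu_K\in U_{{\rm non}L}$ (a finite sum) and subtracting yields $\rho_{L_1}((a_K)-u_1)=0$. The image $b\in H_1(\del V_{L_1})$ of $(a_K)-u_1$ then lies in $\Ker(H_1(\del V_{L_1})\to H_1(X_{L_1}))=\Im\del_{L_1}$ by the long exact sequence of $(X_{L_1}^\circ,\del X_{L_1}^\circ)$ combined with excision, so there is $A\in H_2(M,L_1)$ with $\del_{L_1}(A)=b$. The principal id\`ele $\Delta(A)\in P_{M,\mca{K}}$ agrees with $(a_K)-u_1$ along $L_1$, and the residue $(a_K)-u_1-\Delta(A)$ is supported on meridians of knots in $\mca{K}-L_1\subset \mca{K}-L$, so it lies in $U_{{\rm non}L_1}\subset U_{{\rm non}L}$; combining the three contributions yields the desired decomposition.

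The case $L=\phi$ is a direct specialization, giving $I_{M,\mca{K}}/(P_{M,\mca{K}}+U_{M,\mca{K}})\cong H_1(M)$. When $M$ is an integral homology 3-sphere, $H_1(M)=0$ and thus $I_{M,\mca{K}}=P_{M,\mca{K}}+U_{M,\mca{K}}$; upgrading to a direct sum requires $P_{M,\mca{K}}\cap U_{M,\mca{K}}=0$. If $u=\Delta(A)\in U_{M,\mca{K}}$ with $A$ represented by $A_0\in H_2(M,L_0)$, then all $\lambda_K$-coefficients of $\del_{L_0}(A_0)$ vanish, forcing $\del_*A_0=0$ in $H_1(L_0)$; the exact sequence of the pair $(M,L_0)$, combined with $H_2(M)=0$ for an integral homology 3-sphere, then forces $A_0=0$ and hence $u=0$. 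The principal technical subtlety is the possible meridian contributions of $\Delta(A)$ at knots outside $L_1$, but since these are themselves purely meridional they are cleanly absorbed into the $U_{{\rm non}L}$ summand.
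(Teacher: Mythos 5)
Your proof is correct, but it takes a genuinely different route from the paper's. The paper argues top--down: it invokes Theorem \ref{Pidele} to identify $C_{M,\mca{K}}=I_{M,\mca{K}}/P_{M,\mca{K}}$ with $\varprojlim_{L'}H_1(X_{L'})$, and then computes $\Ker(C_{M,\mca{K}}\surj H_1(X_L))$ inside that inverse limit: since kernels commute with inverse limits, it equals $\varprojlim_{L'}N_{L'}$, where $N_{L'}=\Ker(H_1(X_{L'})\surj H_1(X_L))$ is generated (via Mayer--Vietoris) by the meridians of $L'-L$, i.e.\ exactly the image of $U_{{\rm non}L}$. Your argument is bottom--up: given $(a_K)\in\Ker\rho_L$ you explicitly produce a decomposition into a principal id\`ele plus meridional terms, essentially re-running the construction from the proof of Theorem \ref{Pidele} with the extra correction term $u_1$ supported on the meridians of $L_1-L$. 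Both arguments rest on the same two homological inputs (exactness of $H_2(M,L_1)\to H_1(\del V_{L_1})\to H_1(X_{L_1})$ at the middle term, and the fact that $\Ker(H_1(X_{L_1})\to H_1(X_L))$ is meridional); yours is longer but more self-contained at the level of elements, and it has the concrete advantage of actually proving the final claim $I_{M,\mca{K}}=P_{M,\mca{K}}\oplus U_{M,\mca{K}}$ for an integral homology $3$-sphere: the paper's printed proof only establishes $\Ker\varphi_L=P_{M,\mca{K}}+U_{{\rm non}L}$ and leaves the directness of the sum unaddressed, whereas your observation that a purely meridional $\Delta(A)$ forces $\del_*A_0=0$ in $H_1(L_0)$, hence $A_0=0$ when $H_2(M)=0$, supplies $P_{M,\mca{K}}\cap U_{M,\mca{K}}=0$. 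One cosmetic remark: the requirement that the components of $L_1$ generate $H_1(M)$ is never actually used in your argument for the first assertion; only the vanishing of the longitudinal components of $(a_K)$ outside $L_1$ matters there.
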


\begin{rem}
The assumption in \fgreen{\cite[Proposition 5.7]{Niibo1}} can be paraphrased as follows: 
$H_1(M)$ is torsion free, and the knots $K$ in $\mca{K}$ with non-trivial images in $H_1(M)$ forms its free basis. 
\magenta{We succeeded in removing them.}  
\end{rem}

In the proofs, 
we abbreviate $_{M,\mca{K}}$ by $_M$, and $_{N,h^{-1}(\mca{K})}$ by $_{N}$ for simplicity.

\begin{proof}
For a map $\varphi_L: I_M\surj H_1(X_L)$, we prove $\Ker \varphi_L =P_M+U_{{\rm non} L}$.  
Consider the composite 
$\varphi_L:I_M\surj I_M/P_M=C_M\cong \Gal(M,\mca{K})^{\rm ab}\cong \varprojlim_{L'} H_1(X_{L'})\surj H_1(X_L)$.
For each $L\subset L'\subset \mca{K}$, it factorizes as 
$\varphi_L:I_M\underset{\varphi_{L'}}{\surj} H_1(X_{L'})\underset{pr}{\surj}H_1(X_L)$.  
For the meridian $\mu_K$ of $K$ in $I_M$, the Mayer--Vietoris exact sequence proves 
$N_{L'}:=\Ker({\rm pr}:H_1(X_{L'})\surj H_1(X_L))=\langle\varphi_{L'}(\mu_K)\mid \cyan{K\subset L'- L} \rangle$. 
Hence $\Ker(I_M/P_M\surj H_1(X_L))=U_{{\rm non} L}\mod P_M\cong \varprojlim_{L'} N_{L'}=\Ker(\varprojlim_{L'} H_1(X_{L'})\surj H_1(X_L))$, 
and therefore $\Ker \varphi_L=P_M+U_{{\rm non} L}$. 
\end{proof}

\noindent 
\textbf{proof of Theorem \ref{global CFT for mfd} (i).} 
Since there are isomorphisms
$$C_M/h_*(C_N)\cong (I_M/P_M)/h_*(I_N/P_N)
\cong I_M/(P_M+h_*(I_N)),$$ we consider the natural surjection 
$\varphi': I_M\underset{\varphi_L}{\surj} H_1(X_L)\surj H_1(X_L)/h_*(H_1(Y_L))$. 
Since $\mca{K}$ is very admissible, there is a surjection  
$I_N\surj H_1(Y_L)$, and hence a surjection 
$h_*(I_N)\surj  h_*(H_1(Y_L))$. 
Then, there is the following commutative diagram. 
\[\xymatrix{ \ar@{}[rd]|{\circlearrowright}
h_*(I_N) \ar@{->>}[r]
\ar@{^{(}-_{>}}[d] & h_*(H_1(Y_L)) \ar@{^{(}-_{>}}[d]\\
I_M \ar@{->>}[r]_{\varphi_L} & H_1(X_L)}\]
Since $\Ker \varphi_L=P_M+U_{{\rm non} L}< P_M+h_*(I_N)$,
we have $\Ker \varphi'=\Ker \varphi_L +h_*(I_N)=P_M+h_*(I_N),$
and hence $I_M/(P_M+h_*(I_N))\cong H_1(X_L)/h_*(H_1(Y_L))\cong \Gal(h)$. \ $\square$

\begin{rem}\label{remonK}
In the 
proof \orange{above}, we used the assumption that ``$\mca{K}$ is very admissible'' only to say that $I_{N,h^{-1}(\mca{K})}\to h_*(H_1(Y_L))$ is surjective. Therefore, \orange{in 
the global reciprocity law (Theorem \ref{global CFT for mfd})}, 
we can replace the assumption on $\mca{K}$ by a weaker (and necessary) one: ``for any \orange{finite abelian branched cover} $h:N\to M$, the natural map $I_{N,h^{-1}(\mca{K})}\to h_*(H_1(N))$ is surjective''. 
Especially, if $M$ is an integral homology 3-sphere, 
\orange{then it becomes the empty condition.} 
\end{rem}

\section{The standard topology and the existence theorem 1/2} \label{standard}

In this section, we introduce the standard topology on the id\`ele class group of a 3-manifold, and prove the existence theorem 1/2.\\

Let $M$ be a closed, oriented, connected 3-manifold equipped with a very admissible link $\mca{K}$. 
For each group $\pi_1(\del V_K)\cong H_1(\del V_K)=\langle\mu_K\rangle\oplus \langle\lambda_K \rangle\cong \Z^{\oplus 2}$ of the boundary of \magenta{a} tubular neighborhood of \magenta{each knot $K$ in $\mca{K}$}, we define an analogue of the local topology of $k_\p^{\times}$. Here $\mu_K$ and $\lambda_K$ denote the 
meridian and \magenta{the fixed} longitude of $K$ respectively.  
We first consider the \emph{local norm topology} on $H_1(\del V_K)$, whose open subgroups correspond to the finite abelian covers of $\del V_K$. 
This topology is equal to the \emph{Krull topology}, whose open subgroups are the subgroups of finite indices. 
Then we consider the relative topology on the local inertia group $\langle\mu_K\rangle< H_1(\del V_K)$, 
and re-define the \emph{local topology} on $H_1(\del V_K)$ as the unique topology such that the inclusion $\iota:\langle\mu_K\rangle\inj H_1(\del V_K)$ is open and continuous. 
For this topology, under the identification $\Z\cong \langle\mu_K\rangle\inj H_1(\del V_K)=\langle\mu_K\rangle\oplus \langle\lambda_K\rangle\cong \Z\oplus \Z$, the open subgroup of $H_1(\del V_K)$ has the form $\langle(a,\magenta{0}), (\magenta{b},c)\rangle$ with some $a,b,c\in \Z$, $a\neq 0$. 
Then, the local existence theorem is stated as the 1-1 correspondence between the open subgroups of finite indices and the finite abelian covers. 

With this local topology, $I_{M,\mca{K}}$ is the restricted product with respect to the open subgroups 
$\langle\mu_K\rangle< H_1(\del V_K)$, and $I_{M,\mca{K}}$ equips the \emph{restricted product topology} as follows.
For each finite link $L$ in $\mca{K}$, 
let $G(L):=\prod_{K\subset L}H_1(\del V_K)\times \prod_{K\not \subset L} \langle\mu_K\rangle$, and consider the product topology on $G(L)$.
Then a subgroup $H<I_{M,\mca{K}}$ is open if and only if $H\cap G(L)$ is open for every $L$.

\begin{defn} \cyan{We endow $C_{M,\mca{K}}$ with the quotient topology of the restricted product topology of $I_{M,\mca{K}}$ and call it the \emph{standard topology}.}  
\end{defn}

\cyan{
The restricted product topology on $I_{M,\mca{K}}$ is finer than the relative topology of the product topology of the local topologies, while open subgroups of finite indices coincide. We consider the former in the following.
} 

\cyan{
We study a factorization of $I_{M,\mca{K}}\surj C_{M,\mca{K}}$ which helps us to deal with open subgroups of $C_{M,\mca{K}}$. 
We fix a finite sublink $L_0$ of $\mca{K}$ whose components generate $H_1(M)$. 
For each sublink $L\subset \mca{K}$, we put 
$J_L:=\prod_{K\subset L_0} H_1(\del V_K) \times \prod_{K\subset L-L_0}\langle \mu_K\rangle$. 
Note that $J_{\mca{K}}=G(L_0)$ is an open subgroup of $I_{M,\mca{K}}$. 
}

\cyan{
For finite sublinks $L$ and $L'$ with $L_0\subset L\subset L'\subset \mca{K}$, 
the natural maps form the following commutative diagram. 
\[\xymatrix{ \ar@{}[rd]|{\circlearrowright}
J_{L'} \ar@{->>}[r]
\ar@{->>}[d]_{\rm pr} & H_1(X_{L'}) \ar@{->>}[d]\\
J_L \ar@{->>}[r] & H_1(X_L)}\] 
The natural map $\Ker(J_L'\surj H_1(X_{L'})) \to \Ker(J_L\surj H_1(X_L))$ is surjective. Indeed, let $x\in \Ker(J_L\surj H_1(X_L))$ and let $x$ also denote its image by $J_L\inj J_L\oplus \prod_{K\subset L'-L}\langle \mu_K \rangle =J_{L'}; x\mapsto x+0$. Since $\Ker(H_1(X_L')\surj H_1(X_L))$ is generated by the meridians of $L'-L$, 
There is some $a \in \prod_{K\subset L'-L}\langle \mu_K \rangle$ such that the images of $x$ and $a$ in $H_1(X_{L'})$ coincide. If we put $y=x-a$, then $y \in \Ker(J_L'\surj H_1(X_{L'}))$ and its image in $J_L$ is $x$. 
Since $\{\Ker(J_L\surj H_1(X_L))\}_L$ forms a surjective system and satisfies the Mittag-Leffler condition, 
we have a natural surjection $J_{\mca{K}}=\varprojlim_{{\rm pr},L} J_L\surj C_{M,\mca{K}}$.
}  

\cyan{
For each knot $K'$ in $\mca{K}$ with $K'\not \subset L_0$, we take an element $x_{K'}\in J_{\mca{K}}$ 
satisfying $\lambda_{K'}-x_{K'}\in P_{M,\mca{K}}=\ker \rho_{M,\mca{K}}$. 
Put $Q:=\langle \lambda_{K'}-x_{K'} \mid K'\not\subset L_0\rangle < I_{M,\mca{K}}$.
Then $J_{\mca{K}}\inj I_{M,\mca{K}}\surj C_{M,\mca{K}}$ factors through $I':=I_{M,\mca{K}}/Q \cong (\prod_{K\subset \mca{K}} \Z) \times (\prod_{K\subset L_0} \Z)$. 
}

\cyan{
Let $I'$ be endowed with the quotient topology of the standard topology of $I_{M,\mca{K}}$. Since $J_{\mca{K}}$ is open, the induced group isomorphism $J_{\mca{K}}\congto I'$ is a homeomorphism. 
}

\begin{prop}\label{lemmaQHS}
Let $C_{M, \mca{K}}$ be endowed with the standard topology.
If $M$ is a rational homology 3-sphere, \cyan{then} 
an open subgroup of $C_{M,\mca{K}}$ is of finite index. 
\end{prop}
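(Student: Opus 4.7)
The plan is to use the factorization $J_{\mca{K}}\surj C_{M,\mca{K}}$ established earlier in the section, where $J_{\mca{K}}=G(L_0)$ for a finite sublink $L_0\subset \mca{K}$ whose components generate $H_1(M)$. Set $N:=\Ker(J_{\mca{K}}\surj C_{M,\mca{K}})$. Any open subgroup $H<C_{M,\mca{K}}$ pulls back to an open subgroup $\wt{H}<J_{\mca{K}}$ containing $N$, with $[C_{M,\mca{K}}:H]=[J_{\mca{K}}:\wt{H}]$. It therefore suffices to prove $[J_{\mca{K}}:\wt{H}]<\infty$.

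Since $J_{\mca{K}}$ carries the product topology of the local topologies on its factors, $\wt{H}$ contains some basic open subgroup of the form
\[
U(S,n):=\prod_{K\in S\cap L_0}\langle n\mu_K\rangle \,\times\, \prod_{K\in L_0\setminus S}H_1(\del V_K)\,\times\, \prod_{K\in S\setminus L_0}\langle n\mu_K\rangle\,\times\, \prod_{K\notin S\cup L_0}\langle\mu_K\rangle
\]
for some finite $S\subset \mca{K}$ and some $n\geq 1$, where for $K\in S\cap L_0$ we view $\langle n\mu_K\rangle\subset H_1(\del V_K)$. In the quotient $J_{\mca{K}}/\wt{H}$, every $\mu_K$ with $K\notin S$ and every $\lambda_K$ with $K\in L_0\setminus S$ is killed by $U(S,n)$, while each $\mu_K$ with $K\in S$ has order dividing $n$. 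Hence $J_{\mca{K}}/\wt{H}$ is generated by the finite set $\{\mu_K\mid K\in S\}\cup\{\lambda_K\mid K\in L_0\cap S\}$, and only the longitudes $\lambda_K$ with $K\in L_0\cap S$ remain to be controlled.

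The assumption that $M$ is a rational homology 3-sphere enters through the following Seifert-type relation. Since $H_1(M)$ is finite, $[K]\in H_1(M)$ has some finite order $n_K$, so there exists a surface $\Sigma_K\subset M$ with $\del \Sigma_K=n_K K$. After a small perturbation $\Sigma_K$ may be taken transverse to $\mca{K}$, meeting only finitely many components $K'$ with algebraic intersection numbers $a_{K,K'}\in \Z$. Tubing off these intersections produces the element
\[
\alpha_K:=n_K\lambda_K-\sum_{K'\neq K}a_{K,K'}\mu_{K'}\in J_{\mca{K}},
\]
whose image vanishes in $H_1(X_L)$ for every finite sublink $L\subset \mca{K}$: passing to $L'\supset L$ containing $K$ and all the relevant $K'$ yields the Seifert relation in $H_1(X_{L'})$, and the meridian of a knot in $L'\setminus L$ becomes trivial in $H_1(X_L)$ since it bounds a meridian disk there. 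Hence $\alpha_K\in N\subset \wt{H}$, and in $J_{\mca{K}}/\wt{H}$ the relation $n_K\lambda_K\equiv \sum_{K'}a_{K,K'}\mu_{K'}$ holds; since the right-hand side is $n$-torsion, $\lambda_K$ has order dividing $n\cdot n_K$. Therefore $J_{\mca{K}}/\wt{H}$ is a finitely generated abelian group of bounded exponent, hence finite.

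The main obstacle is precisely the verification that the single element $\alpha_K\in J_{\mca{K}}$ simultaneously witnesses the Seifert relation in every $H_1(X_L)$, which requires a careful bookkeeping of the projections $J_{\mca{K}}\to J_L\to H_1(X_L)$ and of which meridians of knots outside $L$ become null-homologous when passing from $X_{L'}$ to $X_L$.
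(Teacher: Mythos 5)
Your proposal is correct in substance, and it shares with the paper the same opening reduction: factor the quotient map through $J_{\mca{K}}=G(L_0)\cong I'$, pull the open subgroup back to an open subgroup of $J_{\mca{K}}$ containing the kernel, and observe that openness forces the subgroup to contain a basic subgroup $U(S,n)$, so that everything reduces to controlling finitely many meridians and the longitudes $\lambda_K$ for $K\subset L_0$. Where you diverge is in how the rational-homology-sphere hypothesis is used to kill those longitudes. The paper simply quotes its Lemma on the unit id\`ele group, namely $I_{M,\mca{K}}/(P_{M,\mca{K}}+U_{M,\mca{K}})\cong H_1(M)$, which immediately makes $\prod_K\langle\mu_K\rangle+P'$ of finite index in $I'$ and finishes the proof in three lines; that lemma was itself derived from the reciprocity isomorphism $C_{M,\mca{K}}\cong\varprojlim_L H_1(X_L)$ and a Mayer--Vietoris computation. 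You instead re-derive the needed torsion statement by hand: a rational Seifert surface $\Sigma_K$ with $\del\Sigma_K=n_KK$ produces an element of $P_{M,\mca{K}}=\Im\Delta$ expressing $n_K\lambda_K$ as a combination of meridians modulo $\wt{H}$. This is a legitimate and more geometric route, and it makes visible exactly where finiteness of $H_1(M)$ enters; the cost is that you are essentially reproving a special case of the paper's Lemma 5.8. Two small points to tidy up: your $\alpha_K$ should carry an extra self-framing term $m\mu_K$ (the boundary of the truncated surface along $\del V_K$ is $n_K\lambda_K+m\mu_K$ for the fixed, arbitrary longitude $\lambda_K$), and since the paper allows $\mca{K}$ to have accumulation points you cannot in general arrange that $\Sigma_K$ meets only finitely many components of $\mca{K}$. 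Neither matters: $\mu_K$ is already $n$-torsion in $J_{\mca{K}}/\wt{H}$, and an infinite meridian vector is still a legitimate element of $J_{\mca{K}}$ whose multiple by $n$ lies in $U(S,n)$, while meridians of components outside a finite $L'$ still die in $H_1(X_{L'})$, so the membership $\alpha_K\in\Ker\wt{\rho}_{M,\mca{K}}=P_{M,\mca{K}}$ survives.
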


\begin{proof} 
\cyan{Put} $P'=\Ker(I'\surj C_{M,\mca{K}})$. Then we have $I'/(\prod_K \langle\mu_K\rangle + P') \cong H_1(M)$. 
The assumption on $M$ means that $H_1(M)$ is a finite group, and hence 
$\prod_K \langle\mu_K\rangle + P'< I'$ is of finite index. 
Recall $G(L_0)=J_{\mca{K}}\cong I'$ as topological groups. 
If $V$ is an open subgroup of $I'$, then 
$V\cap \prod_K \langle\mu_K\rangle <\prod_K \langle\mu_K\rangle$ is of finite index. 
Let $U$ be an open subgroup of $C_{M,\mca{K}}$ and let $V$ denote the preimage of $U$ in $I'$. 
Then $V$ is an open subgroup of $I'$ containing $P'$. Therefore $V<I'$ is of finite index, and so is $U<C_{M,\mca{K}}$. 
\end{proof} 

\begin{thm}[The existence theorem 1/2]\label{1/2} 
Let $C_{M, \mca{K}}$ be endowed with the standard topology. 
\magenta{Then the correspondence 
\[(h:N\to M) \mapsto h_*(C_{N,h^{-1}(\mca{K})})\] 
gives a bijection between the set of (isomorphism classes of) 
finite abelian covers of $M$ branched over finite links $L$ in $\mca{K}$ and the set of open subgroups of finite indices of $C_{M,\mca{K}}$ with respect to the standard topology.} 
\end{thm}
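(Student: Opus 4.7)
\smallskip
\noindent
\textbf{Proof proposal for Theorem \ref{1/2}.}
The plan is to convert the statement, via the global reciprocity isomorphism $\rho_{M,\mca{K}}:C_{M,\mca{K}}\congto \Gal(M,\mca{K})^{\rm ab}$ of Theorem \ref{global CFT for mfd}, into a Galois-correspondence statement at the level of $\Gal(M,\mca{K})^{\rm ab}=\varprojlim_{L\subset \mca{K}}H_1(X_L)$, and then translate the topological ``openness'' of a subgroup of $C_{M,\mca{K}}$ into the geometric condition ``branched over a finite link''. The map is given by $(h:N\to M)\mapsto h_*(C_{N,h^{-1}(\mca{K})})$; well-definedness up to isomorphism is built into our convention on base points. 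It suffices to check: (a) for each finite abelian branched cover $h$, the group $h_*(C_{N,h^{-1}(\mca{K})})$ is an open subgroup of $C_{M,\mca{K}}$ of finite index; (b) every such open finite-index subgroup arises from some $h$; and (c) the assignment is injective.

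For (a), Theorem \ref{global CFT for mfd}(i) already gives that the quotient $C_{M,\mca{K}}/h_*(C_{N,h^{-1}(\mca{K})})\cong\Gal(h)$ is finite, so the index is finite. For openness, I would pull the subgroup back to $I_{M,\mca{K}}$ and intersect it with each $G(L')$ ($L'\subset \mca{K}$ finite). Let $L$ be the branch locus of $h$. For $K\not\subset L$ the local map $H_1(\del V_K)\to \Gal(h)$ factors through $H_1(V_K)=\langle\lambda_K\rangle$, so the meridian $\mu_K$ lies in the preimage $V$; for $K\subset L$, the local kernel is a finite-index subgroup of $H_1(\del V_K)$. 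Consequently, for any finite $L'$, $V\cap G(L')$ decomposes as a product of finite-index subgroups at the (finitely many) knots in $L\cup L'$ and of the full $\langle\mu_K\rangle$ elsewhere, which is manifestly open in $G(L')$.

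For (b), let $U$ be an open subgroup of finite index in $C_{M,\mca{K}}$, and let $V$ be its preimage in $I_{M,\mca{K}}$. Since $V\cap G(\emptyset)=V\cap\prod_K\langle\mu_K\rangle$ is open in the product topology, there exists a finite link $L\subset\mca{K}$ such that $\prod_{K\not\subset L}\langle\mu_K\rangle\subset V$. Thus every meridian of a knot outside $L$ is killed in the finite abelian quotient $C_{M,\mca{K}}/U$, which via $\rho_{M,\mca{K}}$ yields a finite abelian quotient of $\Gal(M,\mca{K})^{\rm ab}$ that factors through $H_1(X_L)$ (using the Mittag--Leffler surjective system established in Section 5). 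The corresponding connected abelian cover of $X_L$ extends by Fox completion to a finite abelian cover $h:N\to M$ branched over a sublink of $L$, and by construction $h_*(C_{N,h^{-1}(\mca{K})})=U$.

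For (c), if $h:N\to M$ and $h':N'\to M$ satisfy $h_*(C_{N,h^{-1}(\mca{K})})=h'_*(C_{N',h'^{-1}(\mca{K})})$, then applying $\rho_{M,\mca{K}}$ yields equality of the corresponding subgroups of $\Gal(M,\mca{K})^{\rm ab}$, hence of the covers through Galois theory of the exteriors and their Fox completions. The main technical point I expect to require care is the exact interplay in (a) and (b) between the restricted-product topology on $I_{M,\mca{K}}$ and the factorization $I_{M,\mca{K}}\surj J_{\mca{K}}\congto I'\surj C_{M,\mca{K}}$ introduced before Proposition \ref{lemmaQHS}: one must verify that ``open in $C_{M,\mca{K}}$'' really does translate to the finite-branching condition, and conversely that finite branching together with finiteness of $\Gal(h)$ really gives openness at every finite $L'$, which is the geometric heart of the statement.
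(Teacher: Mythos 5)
Your proposal is correct and follows essentially the same route as the paper: both extract, from openness of the pullback of $U$ along the meridian part $\prod_{K}\langle\mu_K\rangle$, a finite sublink $L$ with $U\supset\Ker(C_{M,\mca{K}}\surj H_1(X_L))$, and then invoke the finite Galois correspondence for covers branched over sublinks of $L$ together with Theorem \ref{global CFT for mfd}(i); the paper merely packages this as a union of compatible bijections ${\rm Cov}_L \leftrightarrow \mca{O}_L$ and pulls $U$ back to the quotient $I'$ rather than to $I_{M,\mca{K}}$ itself. One small wording slip in your step (a): $V\cap G(L')$ need not literally decompose as a product of local kernels (the kernel of a sum of local maps is larger than the product of the local kernels), but it contains such a product, which is all that openness requires.
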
 

\begin{proof} \cyan{
For each finite link $L$ with} $L_0\subset L\subset \mca{K}$, 
let ${\rm Cov}_L$ denote the set of finite abelian covers $h: N\to M$ branched over sublinks of $L$, 
and let $\mca{O}_L$ denote the set of open subgroups of $C_{M,\mca{K}}$ of finite indices containing $\Ker(C_{M,\mca{K}}\surj H_1(X_L))$. 

Let $U$ be an open subgroup of $C_{M,\mca{K}}$ of finite index and let $V$ denote the preimage of $U$ by $I'\surj C_{M,\mca{K}}$. Since $V$ is an open subgroup of $I'$ of finite index, there is some finite link $L$ with $L_0\subset L\subset \mca{K}$ such that $V$ contains a subgroup $\prod_{K\not \subset L}\langle \mu_K \rangle \times \prod_{K\subset L} a_K\langle \mu_K\rangle$ ($a_K\in \N$) and hence contains   
$\prod_{K\not \subset L}\langle \mu_K \rangle \times \prod_{K\subset L} 0$. 
Therefore, $U$ contains the image of $\prod_{K\not \subset L}\langle \mu_K \rangle \times \prod_{K\subset L} 0$, which coincides \orange{with} $\Ker (C_{M,\mca{K}}\surj H_1(X_L))$. 
Thus the union $\cup_L \mca{O}_L$ coincides \orange{with} the set of all the open subgroups of $C_{M,\mca{K}}$ of finite indices.

Conversely, if $U$ is a subgroup of $C_{M,\mca{K}}$ of finite index containing $\Ker (C_{M,\mca{K}}\surj H_1(X_L))$ for a finite link $L$ with $L_0\subset L \subset \mca{K}$, then $U$ is open. 


For each finite link $L$ with $L_0\subset L \subset \mca{K}$, we have a natural bijection ${\rm Cov}_L\to \mca{O}_L$ by the Galois correspondence. In addition, for each finite links $L$ and $L'$ with $L_0\subset L \subset L' \subset \mca{K}$, the inclusions ${\rm Cov}_L\subset {\rm Cov}_{L'}$ and $\mca{O}_L\subset \mca{O}_L'$ are compatible with the Galois correspondences. 

The union $\cup_L {\rm Cov}_L$ is the set of all the finite abelian covers branched over finite sublinks of $\mca{K}$. 
Since the inductive limit of bijective maps is again bijective, 
\cyan{we obtain the desired bijection.}  
\end{proof}

\section{The norm topology and the existence theorem}
In this section, we introduce the \emph{norm topology} on the id\`ele class group, 
and present the existence theorem.\\ 

Let $M$ be a closed, oriented, connected 3-manifold equipped with a very  admissible link $\mca{K}$ as before. 
In the proofs, 
we use the abbreviations $C_M=C_{M,\mca{K}}$ and $C_N=C_{N,h^{-1}(\mca{K})}$ for a branched cover $h:N\to M$.  

\begin{defn} We define the {\it norm topology} on $C_M$ to be the topology of topological group generated by the family $\mca{V}:=\(h_*(C_{N,h^{-1}(\mca{K})})\)$, where $h:N\to M$ runs through all the finite abelian covers of $M$ branched over finite links in $\mca{K}$. 
\end{defn}

\begin{lem}\label{fsys} 
$\mca{V}$ is a fundamental system of neighborhoods of 0.
\end{lem}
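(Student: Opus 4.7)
The plan is to verify directly that the family $\mca{V}$ satisfies the axioms for a fundamental system of neighborhoods of $0$ in a topological group. Since every element of $\mca{V}$ is a subgroup $V$ of $C_{M,\mca{K}}$, the conditions $V+V\subset V$, $-V=V$, and $0\in V$ are automatic; the essential point to check is that $\mca{V}$ is downward directed, i.e., for any two members there is a third member contained in their intersection.

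So, given finite abelian covers $h_1:N_1\to M$ and $h_2:N_2\to M$ branched over finite links $L_1,L_2\subset\mca{K}$, I would construct a cover $h:N\to M$ dominating both. Concretely, using the universal $\mca{K}$-branched cover fixed in Section 3, the subgroups $\ul{h}_{i*}(\pi_1(Y_{L_i}))\le \pi_1(X_{L_1\cup L_2})$ are both normal with abelian quotients, so their intersection is again normal with abelian quotient; by Fox completion this produces a finite abelian cover $h:N\to M$ branched over a finite sublink of $L_1\cup L_2\subset\mca{K}$, together with factorizations $h_i\circ f_i=h$ for maps $f_i:N\to N_i$.

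Now I would apply the functoriality (transitivity) of the norm maps on id\`ele class groups established in Section 5: since $h_*=h_{i*}\circ f_{i*}$, we have
\[ h_{*}\bigl(C_{N,h^{-1}(\mca{K})}\bigr)=h_{i*}\Bigl(f_{i*}\bigl(C_{N,h^{-1}(\mca{K})}\bigr)\Bigr)\subset h_{i*}\bigl(C_{N_i,h_i^{-1}(\mca{K})}\bigr)\]
for $i=1,2$, so $h_{*}(C_{N,h^{-1}(\mca{K})})\subset h_{1*}(C_{N_1})\cap h_{2*}(C_{N_2})$, as required. Note that since $\mca{K}$ is very admissible, so is $h^{-1}(\mca{K})$, so $C_{N,h^{-1}(\mca{K})}$ is defined.

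The main (and only substantive) obstacle is making precise the construction of the compositum branched cover and confirming it is a finite abelian cover of $M$ branched over a finite sublink of $\mca{K}$. This step is routine in the unbranched setting and is handled cleanly via Fox completion once we work inside the fixed universal $\mca{K}$-branched cover, so I would appeal to Section 3 for this rather than reproving it here. Once the compositum is in hand, the rest of the axioms for the norm topology reduce to the subgroup identities above, and the lemma follows.
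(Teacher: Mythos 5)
Your argument is correct, but it takes a genuinely different route from the paper's. You only establish directedness of $\mca{V}$: you build a common refinement $h$ of $h_1$ and $h_2$ (the compositum cover, obtained by intersecting finite-index subgroups over $X_{L_1\cup L_2}$ and Fox-completing) and then use transitivity of the norm maps, $h_{*}=h_{i*}\circ f_{i*}$, to get $h_{*}(C_{N,h^{-1}(\mca{K})})\subset h_{1*}(C_{N_1,h_1^{-1}(\mca{K})})\cap h_{2*}(C_{N_2,h_2^{-1}(\mca{K})})$. That containment is all the lemma needs, and your argument has the virtue of not invoking the global reciprocity law at all. The paper instead works with $G_L=\Gal(X_L^{\rm ab}/X_L)$ for $L=L_1\cup L_2$, constructs the composition cover $h_3$ with $\Ker(G_L\surj \Gal(h_3))=G_1\cap G_2$, and applies the global reciprocity law (Theorem \ref{global CFT for mfd}) to identify $h_{i*}(C_{N_i,h_i^{-1}(\mca{K})})=\varphi_L^{-1}(G_i)$; this buys the stronger conclusion that $V_1\cap V_2$ itself lies in $\mca{V}$, i.e.\ $\mca{V}$ is closed under finite intersections. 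Two small points to tighten in your version: the subgroups $\ul{h}_{i*}(\pi_1(Y_{L_i}))$ live in $\pi_1(X_{L_i})$, so you must first pull them back along the surjections $\pi_1(X_{L_1\cup L_2})\surj \pi_1(X_{L_i})$ before intersecting (the quotient by the intersection is then abelian because it embeds in $\Gal(h_1)\times\Gal(h_2)$); and very admissibility of $h^{-1}(\mca{K})$ is not needed for $C_{N,h^{-1}(\mca{K})}$ to be defined, only for the reciprocity law, which your proof does not use.
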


\begin{proof}
For any $V_1, V_2 \in \mca{V}$, it is suffice to prove $\exists V_3\in \mca{V}$ such that $V_3 \subset V_1\cap V_2$. However, we prove $V_3 := V_1\cap V_2\in \mca{V}$.

Let $h_i:N_i\to M$ be a finite abelian cover branched over $L_i$ in $\mca{K}$ for $i=1,2$. 
Let $L:=L_1\cup L_2$, and let $G_L:=\Gal(X_L^{\rm ab}/X_L)$ denote the Galois group of the maximal abelian cover over the exterior $X_L=M\setminus {\rm Int}(V_L)$.
Then, if a cover $h:N\to M$ is unbranched outside $L$, the map $C_M\surj \Gal(h)$ factors through the natural map $\varphi_L:C_M\surj G_L$. 

Let $G_i:=\Ker(G_L\surj \Gal(h_i))<G_L$ for $i=1,2$, and let $G_3:=G_1\cap G_2$. Since $G_3$ is also a subgroup of $G_L$ of finite index, the ordinary Galois theory for branched covers gives a cover $h_3:N_3\to M$ such that $G_3=\Ker(G_L\surj \Gal(h_3))$. (This cover $h_3$ should be called the ``composition cover'' of $h_1$ and $h_2$, because it is  an analogue of the composition field $k_1k_2$ of $k_1$ and $k_2$ in number theory.)  

Now, Theorem \ref{global CFT for mfd} (the global reciprocity law) implies 
$h_{i*}(C_{N_i})=\varphi_L^{-1}(G_i)$ for $i=1,2,3$,
and therefore 
$h_{3*}(C_{N_3})=\varphi_L^{-1}(G_3)=\varphi_L^{-1}(G_1\cap G_2)
=\varphi_L^{-1}(G_1)\cap \varphi_L^{-1}(G_2)
=h_{1*}(C_{N_1})\cap h_{2*}(C_{N_2}).$ 
\end{proof}

\begin{prop} 
Let $C_{M,\mca{K}}$ be endowed with the norm topology. 
A subgroup $V$ of  $C_{M,\mca{K}}$ is open if and only if it is closed and of finite index. 
\end{prop}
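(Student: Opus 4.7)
The plan is to prove both implications using standard topological-group facts together with Lemma \ref{fsys} and the global reciprocity law (Theorem \ref{global CFT for mfd}). The content of the statement is essentially that the norm topology on $C_{M,\mca{K}}$ behaves, with respect to its subgroups, like the profinite-style topologies familiar from number-theoretic class field theory.

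First I would handle the forward implication: suppose $V$ is open in the norm topology. By Lemma \ref{fsys}, the family $\mca{V}$ is a fundamental system of neighborhoods of $0$, so $V$ must contain some $h_*(C_{N,h^{-1}(\mca{K})})$ for a finite abelian cover $h:N\to M$ branched over a finite link in $\mca{K}$. Theorem \ref{global CFT for mfd} (i) then gives $[C_{M,\mca{K}}:h_*(C_{N,h^{-1}(\mca{K})})] = |\Gal(h)| < \infty$, so $[C_{M,\mca{K}}:V]$ divides this finite index and in particular $V$ has finite index. It is also closed for the standard topological-group reason: its complement in $C_{M,\mca{K}}$ is the union of the finitely many nontrivial cosets of $V$, each of which is a translate of $V$ under a homeomorphism and hence open; thus the complement is open and $V$ is closed.

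For the converse, I would take $V$ closed and of finite index. Then $C_{M,\mca{K}}\setminus V$ is a finite union of nontrivial cosets of $V$. Since translation by any fixed element of the topological group $C_{M,\mca{K}}$ is a homeomorphism, each such coset is closed; a finite union of closed sets is closed, so $C_{M,\mca{K}}\setminus V$ is closed and therefore $V$ is open.

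The argument is short, and I do not anticipate a genuine obstacle: all the geometric content has already been packaged into Lemma \ref{fsys} (which gives a fundamental system of neighborhoods) and the index formula $C_{M,\mca{K}}/h_*(C_{N,h^{-1}(\mca{K})})\cong \Gal(h)$ of Theorem \ref{global CFT for mfd} (i). Once these are invoked, the proof reduces to the familiar equivalence that a subgroup of a topological group which admits a fundamental system of open, finite-index subgroups at $0$ is open if and only if it is closed of finite index.
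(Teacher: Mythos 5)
Your proof is correct and follows essentially the same route as the paper's: Lemma \ref{fsys} supplies a subgroup $h_*(C_{N,h^{-1}(\mca{K})})\subset V$, Theorem \ref{global CFT for mfd}(i) gives the finiteness of the index, and both directions of the closed/open equivalence are handled by the coset decomposition exactly as in the paper. No gaps.
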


\begin{proof} Let $V$ be an open subgroup of $C_M$. 
The coset decomposition of $C_M$ by $V$ proves that $V$ is closed. 
Lemma \ref{fsys} gives a finite abelian branched cover $h:N\to M$ such that $h_{*}(C_N)<V$.
Then Theorem \ref{global CFT for mfd} implies $(h_{*}(C_N):V)(V:C_M)=(h_{*}(C_N):C_M)=\# \Gal (h)$, and hence  $V$ is of finite index. 

The converse is also clear by the coset decomposition.
\end{proof}

Now 
we present the existence theorem for 3-manifolds with respect to both the standard topology and the norm topology, which is the counter part of \textbf{Theorem \ref{global CFT}} (2).

\begin{thm}[The existence theorem
]\label{main thm} \label{existCFT}
Let $M$ be a closed, oriented, connected 3-manifold equipped with a very  admissible link $\mca{K}$. 
Then the correspondence 
\[(h:N\to M) \mapsto h_*(C_{N,h^{-1}(\mca{K})})\]
gives a bijection between the set of (isomorphism classes of) 
finite abelian covers of $M$ branched over finite links $L$ in $\mca{K}$ and the set of open subgroups of finite indices of $C_{M,\mca{K}}$ with respect to the standard topology. Moreover, the latter set coincides with
the set of open subgroups of $C_{M,\mca{K}}$ with respect to the norm topology. 
\end{thm}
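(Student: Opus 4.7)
The plan is to observe that the first half of the theorem—the bijection between finite abelian branched covers and open subgroups of finite index with respect to the standard topology—is exactly Theorem \ref{1/2}, which has already been established. Hence the only new content is the final sentence: that the set of open subgroups of $C_{M,\mca{K}}$ with respect to the norm topology coincides with the set of open subgroups of finite index with respect to the standard topology. My plan is to prove this equality as a double inclusion, with both directions following easily from previously-established machinery.

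First I would show that every open subgroup $V$ in the norm topology is open and of finite index in the standard topology. By Proposition 7.3, $V$ is already of finite index. By Lemma \ref{fsys}, the family $\mca{V}$ is a fundamental system of neighborhoods of $0$ for the norm topology, so $V$ must contain $h_{*}(C_{N,h^{-1}(\mca{K})})$ for some finite abelian branched cover $h:N\to M$. By Theorem \ref{1/2}, this subgroup $h_{*}(C_{N,h^{-1}(\mca{K})})$ is open of finite index in the standard topology; since $V$ decomposes into finitely many cosets of this open subgroup, $V$ is itself open in the standard topology.

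Conversely, let $V$ be an open subgroup of finite index with respect to the standard topology. By the bijection in Theorem \ref{1/2}, $V = h_{*}(C_{N,h^{-1}(\mca{K})})$ for some finite abelian cover $h:N\to M$ branched over a finite sublink of $\mca{K}$. Such a subgroup belongs to the defining fundamental system $\mca{V}$ by the very definition of the norm topology, so $V$ is open with respect to the norm topology.

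The main obstacle is really not in the present argument but lies upstream, in Lemma \ref{fsys}: verifying that $\mca{V}$ genuinely forms a fundamental system requires the existence of a topological analogue of the composition of two number fields, whose identification with an intersection of norm subgroups uses the global reciprocity law (Theorem \ref{global CFT for mfd}). With that and Theorem \ref{1/2} in hand, the existence theorem becomes essentially a repackaging of previously proved results.
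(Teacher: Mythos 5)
Your proof is correct and takes essentially the same route as the paper: both reduce everything to Theorem \ref{1/2}, Lemma \ref{fsys}, and the global reciprocity law. The only organizational difference is that the paper establishes the bijection onto norm-open subgroups directly via the lattice correspondence $\{C' \mid h_*(C_N)<C'<C_M\}\leftrightarrow \{\text{subcovers of }h\}$ and reads off the coincidence of the two families afterwards, whereas you prove the coincidence of the two families first (using Proposition 7.3 for finite index) and let Theorem \ref{1/2} supply the bijection; both assemblies are valid.
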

  
\begin{proof} 
The former part is done by Theorem \ref{1/2}.
We prove the theorem for the norm topology. 
For a finite abelian cover $h:N\to M$ branched over a finite link in $\mca{K}$, 
the isomorphism $C_M/h_*(C_N)\cong \Gal(h)$ in Theorem \ref{global CFT for mfd} (the global reciprocity law) gives the following bijections. 
\begin{align*}
\(C'\mid h_*(C_N)<C'<C_M\) 
&\LR \(H\mid H<C_M/h_*(C_N)\cong \Gal(h)\) \\
&\LR \({\rm subcovers\ of\ }h\)
\end{align*}
\noindent 
(Injectivity) 
For covers $h_1$ and $h_2$, this bijections proves that 
$h_{1*}(C_{N_1})<h_{2*}(C_{N_2})$ $\underset{\rm iff}{\iff} h_2$ is a subcover of $h_1$, and hence $h_{1*}(C_{N_1})=h_{2*}(C_{N_2}) \underset{\rm iff}{\iff} h_2=h_1$.

\noindent 
(Surjectivity) 
For an open subgroup $C'<C_M$, 
Lemma \ref{fsys} gives a cover $h:N\to M$ such that $h_*(C_N)<C'$, 
and then the above bijection gives a cover $h'$ which corresponds to $C'$.\end{proof}

\begin{cor}
If $M$ is a rational homology 3-sphere, the standard topology and the norm topology on $C_{M,\mca{K}}$ coincide.
\end{cor}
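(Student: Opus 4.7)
The plan is to show that in both the standard topology and the norm topology on $C_{M,\mca{K}}$, the open subgroups form a fundamental system of neighborhoods of $0$, and to then argue that these two collections of open subgroups coincide when $M$ is a rational homology $3$-sphere. By Theorem \ref{main thm}, the standard-open subgroups of finite index of $C_{M,\mca{K}}$ coincide with the norm-open subgroups of $C_{M,\mca{K}}$. By Proposition \ref{lemmaQHS}, when $M$ is a rational homology $3$-sphere, every standard-open subgroup of $C_{M,\mca{K}}$ is automatically of finite index. Combining these two facts, the set of standard-open subgroups equals the set of norm-open subgroups.

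Next I would verify that in each topology the open subgroups form a neighborhood base at $0$. For the norm topology this is precisely Lemma \ref{fsys}, since $\mca{V}$ is by definition a family of subgroups and is a fundamental system of neighborhoods of $0$. For the standard topology, I would show that the restricted product topology on $I_{M,\mca{K}}$ admits a neighborhood base of $0$ consisting of open subgroups: explicitly, sets of the form $\prod_{K\subset L}n_K\langle\mu_K\rangle\times\prod_{K\not\subset L}\langle\mu_K\rangle$ where $L$ runs over the finite sublinks of $\mca{K}$ and $n_K\in\N_{>0}$. This uses the fact that the local topology on each $H_1(\del V_K)$ is generated by its (finite-index) open subgroups and that $\langle\mu_K\rangle$ is itself open. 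Taking images under the quotient $I_{M,\mca{K}}\surj C_{M,\mca{K}}$ produces a neighborhood base at $0$ in $C_{M,\mca{K}}$ consisting of open subgroups.

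Once these two preliminary points are in place, the conclusion is essentially formal: two group topologies on the same abelian group whose neighborhood filters at $0$ are generated by the same collection of subgroups must agree. The main obstacle will be the second bullet above, namely checking that open subgroups do form a base at $0$ in the quotient topology on $C_{M,\mca{K}}$; once that is confirmed, the corollary follows tautologically from Theorem \ref{main thm} and Proposition \ref{lemmaQHS}.
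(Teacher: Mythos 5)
Your proposal is correct and follows essentially the same route as the paper, which likewise deduces the corollary directly from Proposition \ref{lemmaQHS} combined with the existence theorem (Theorem \ref{main thm}). The only difference is that you explicitly verify that open subgroups form a neighborhood base of $0$ in both topologies before concluding that equality of the families of open subgroups forces equality of the topologies; the paper treats this as immediate, and your added care here is sound but not a different argument.
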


\begin{proof}
By Proposition \ref{lemmaQHS}, 
it follows immediately 
from the existence theorem.
\end{proof}

\section{Remarks} 

\subsection{Norm residue symbols}
In the proof of \red{the existence theorem} 
for number fields (Theorem \ref{global CFT} (2)), the norm residue symbol played an essential role (\cite{Neukirch}).

Let $M$ be a closed, oriented, connected 3-manifold equipped with a very admissible knot set $\mca{K}$.
For a finite abelian cover $h:N\to M$ branched over a finite link $L$ in $\mca{K}$, we define the \emph{norm residue symbol}
 $(\ , h): C_{M,\mca{K}}\surj \Gal(h)$ by the composite of 
$\rho_{M,\mca{K}}:C_{M,\mca{K}}\surj \Gal(M^{\rm ab}/M)$ and $\Gal(M^{\rm ab}/M)\surj \Gal(h)$. 
For this map, we have $\Ker(\ ,h)=h_{*}(C_{N,h^{-1}(\mca{K})})$.

By using the norm residue symbol for 3-manifolds, we can give another proof for the 
existence theorem for 3-manifolds (Theorem \red{\ref{existCFT}}). 

The norm reside symbols are extensions of the Legendre symbol and the linking number (mod 2). 
In number theory, the quadratic reciprocity law was deduced from the global reciprocity law (\cite[Chapter 5]{KKS2}). In a similar way, we can deduce the symmetricity of the linking number (mod 2) from our global reciprocity law (Theorem \red{\ref{global CFT for mfd}} (1)). 


\subsection{The class field axioms}
The axiom of class field theory (\cite{Neukirch}) does not hold 
for our modules: 
Let $M$ be an closed, oriented, connected 3-manifold equipped with a very admissible link $\mca{K}$, $h:N\to M$ a cyclic branched cover of degree $n$ branched over some $L\subset \mca{K}$, and put $G=\Gal(h)$. Then the Tate cohomologies do not necessarily satisfy the following: 
(i) $\wh{H}^0(G,C_{N,h^{-1}(\mca{K})})\cong \Z/n\Z$, 
(ii) $\wh{H}^{1}(G,C_{N,h^{-1}(\mca{K})})=0$, 
(iii) $\wh{H}^i(G,U_{N,h^{-1}(\mca{K})})=0$. 
 
It is recently announced by Mihara that he gave another formulation of idelic class field theory satisfying the axiom of class field theory for 3-manifolds equipped with our very admissible links, compatible with our work, by introducing the notion of \emph{the finite \'etale cohomology} of 1-cocycle sheaves (\cite{Mihara-CFT-fukuoka2016}). 

\subsection{Application to the genus theory} \label{appl to genus}
The genus formula for finite Galois extensions of number fields was given by Furuta with use of id\`ele theory (\cite{Furuta1967}). 
In \cite{Ueki3}, the second author formulated its analogue for finite branched Galois covers over $\Q$HS$^3$'s, gave a parallel proof to the original one by using our id\`ele theory, and generalized Morishita's work (\cite{Morishita2001g}). 

\section*{Acknowledgments}
We would like to express our sincere gratitude to our supervisor Masa nori Morishita for his advice and \fgreen{ceaseless} encouragement. 
We 
would like to thank Tomoki Mihara, who suggested that we should establish the existence theorem, and had good discussions with us. 
\fgreen{We are grateful to Atsushi Yamashita for informing much about the topology of infinite links.}  
In addition, 
we would like to thank Ted Chinberg, 
\orange{Brian Conrad,} 
\red{Ji Feng,} Tetsuya Ito, Yuich Kabaya, 
\fgreen{Dohyeong Kim}, 
\orange{Toshitake Kohno,} 
Makoto Matsumoto, Hitoshi Murakami, 
\orange{Takayuki Oda,} 
\orange{Takayuki Okuda,} 
Adam Sikora, 
\orange{Tomohide Terasoma,} 
\red{Takeshi Tsuji}, 
\orange{Akihiko Yukie,} 
Don Zagier, \red{and the anonymous referees} for giving useful comments, and Kanako Nakajima for checking our English. 
\red{The authors are} partially supported by Grant-in-Aid for JSPS Fellows (
\red{27-7102,} 25-2241).


\bibliographystyle{amsalpha}
\bibliography{niiboueki-icft.tams}

\ \\
\noindent \footnotesize 
Hirofumi Niibo: Faculty of Mathematics, Kyushu University, 744, Motooka, Nishi-ku, Fukuoka, 819-0395, Japan, E-mail: \url{h-niibo@math.kyushu-u.ac.jp} \\[2mm]
Jun Ueki: Graduate School of Mathematical Sciences, The University of Tokyo, 3-8-1 Komaba, Meguro-ku, Tokyo, 153-8914, Japan, E-mail: \url{uekijun46@gmail.com} 
\end{document}

\ \\
Hirofumi Niibo \\
Faculty of Mathematics, Kyushu University \\
744, Motooka, Nishi-ku, Fukuoka, 819-0395, Japan \\
E-mail: \url{h-niibo@math.kyushu-u.ac.jp} \\
\ \\
Jun Ueki \\
Graduate School of Mathematical Sciences, The University of Tokyo \\
3-8-1 Komaba, Meguro-ku, Tokyo, 153-8914, Japan\\
E-mail: \url{uekijun46@gmail.com} 

\end{document}
\newpage

\setcounter{section}{7}

\section{【旧】The norm residue symbols}
In this section, we introduce the norm residue symbol for 3-manifolds, \red{which may be regarded as} an analogue of the norm residue symbol for number fields. We also explain that they generalize the linking number ${\rm lk}(K_1,K_2)$ and the Legendre symbol $\ds \left(\frac{\, p\, }{q}\right)$.

\begin{defn} 
For a finite abelian extension $F/k$, the \emph{norm residue symbol}
 $(\ , F/k): C_k\surj \Gal(F/k)$ is defined as the composite of 
$\rho_k:C_k\surj \Gal(k^{\rm ab}/k)$ and $\Gal(k^{\rm ab}/k)\surj \Gal(F/k)$. 
For this map, we have $\Ker(\ ,F/k)=N_{F/k}(C_F)$. 
\end{defn}

The relation with Legendre's quadratic residue symbol can be seen as follows: 
Let $p$ and $q$ be distinct primes in $k=\Q$, and 
let $F=\Q(\sqrt{q})$ be the quadratic extension of $\Q$ ramified at $q$. 
Then \cite{KKS2} Lemma 5.19 
states the following equivalences:
\begin{alignat*}{2}
\left(\frac{\, q\, }{p}\right)&=1 
&&\iffu (p)=\p_1\p_2 \text{\ with two primes\ }\p_1,\p_2 \text{\ in\ } \mca{O}_F \text{\ (decomposed),}\\ 
\left(\frac{\, q\, }{p}\right)&=-1 
&&\iffu (p) \text{\ is a prime in\ } \mca{O}_F \text{\ (inert).} 
\end{alignat*}
%

On the other hand, 
under the identification $\Gal(F/k)\cong \(\pm 1\)$, 
there are the following equivalences: \\[-3mm]

$((p),F/k)=1$  $\iffu (p) \in N_{F/k}(C_{F/k}) \iffu (p)$ is decomposed in $F/k$. 

Therefore, we have $\ds \left(\frac{\, q\, }{p}\right)=((p),\Q(\sqrt{q})/\Q)$ in $\(\pm1\)$. 

\begin{defn} 
Let $M$ be a 3-manifold equipped with a very admissible knot set $\mca{K}$.
For a finite abelian cover $h:N\to M$ branched over a finite link $L$ in $\mca{K}$, the \emph{norm residue symbol}
 $(\ , h): C_{M,\mca{K}}\surj \Gal(h)$ is defined as the composite of 
$\rho_{M,\mca{K}}:C_{M,\mca{K}}\surj \Gal(M^{\rm ab}/M)$ and $\Gal(M^{\rm ab}/M)\surj \Gal(h)$. 
For this map, we have $\Ker(\ ,h)=h_{*}(C_{N,h^{-1}(\mca{K})})$.
\end{defn}

The relation with the linking number can be seen as follows: 
Let $h_2:N\to M$ be the double cover of $M=S^3$ branched over a knot $K_2$ in a two component link $K_1\sqcup K_2$.
We identify $\Gal(h_2)\cong \Z/2\Z$. 
Then, for a longitude $\lambda_1$ of $K_1$ in $C_{M,\mca{K}}$,  
we have  $(\lambda_1,h_2)={\rm lk}(K_1,K_2)$ (mod 2). Moreover, there are the following equivalences:
\begin{align*}
(\lambda_1,h_2)=0 &\iffu h^{-1}(K_1)=K_1'\sqcup K_1''
\text{\ with knots\ }  K_1', K_1'' \text{\ in\ } N  \text{\ (decomposed),} \\ 
(\lambda_1,h_2)=1 &\iffu h^{-1}(K_1)=\wt{K_1} \text{\ is a knot in\ } N \text{\ (inert).}
\end{align*}

Thus, we have obtained an extension of the dictionary of analogies. 
$$
\begin{tabular}{|c||c|}
\hline 
linking number lk$(K_1,K_2)$ (mod 2) &Legendre symbol $\ds \left(\frac{\, p\, }{q}\right)$\\
\hline 
norm residue symbol $(\ ,h)$& norm residue symbol $(\ ,F/k)$\\ 
\hline 
\end{tabular}
$$

Let $p$ and $q$ be distinct odd primes and $q^*:=(-1)^{\frac{q-1}{2}}q$. 
\red{Then the quadratic reciprocity law $\ds \left(\frac{q^*}{p}\right)=\left(\frac{\, p\, }{q}\right)$ follows from Artin's global reciprocity law (Theorem \ref{global CFT} (1)) (See \cite{KKS2} Chapter 5). 
Similarly, for knots $K_1$ and $K_2$ in $S^3$, 
we can give an alternative proof of ${\rm lk}(K_1,K_2)\equiv {\rm lk}(K_2,K_1)$ $\mod 2$ by using our global reciprocity law (Theorem \ref{global CFT for mfd}).
This fact extends an analogy described in} 
\cite{Morishita2012} Chapter\red{s} 4 and 5. 

In the proof of \red{the existence theorem} 
for number fields (Theorem \ref{global CFT} \red{(2)}), the norm residue symbol plays an essential role (\cite{Neukirch}). 
By using the norm residue symbol for 3-manifolds, we can also give a parallel proof for the 
\red{existence theorem} 
for 3-manifolds (Theorem \red{\ref{existCFT}}), 
although it becomes a little more complicated-looking than our proof in this paper. 


\section{【旧】Axiom of class field theory} 
Finally, we calculate the Tate cohomology of id\`ele class group, and 
compare the result with the axiom of class field theory. 

When a finite cyclic group $G=\langle\sigma\rangle$ of order $n$ acts on a module $A$, 
by definition, the Tate cohomology is calculated as 
$$\wh{H}^0(G,A)=A^G/\Nr A,\ 
\wh{H}^1(G,A)\cong \Ker (\Nr)/(1-\sigma)A.$$  
Here,  $\ds \Nr=\sum_{0\leq i \leq n-1}\sigma^i$ 
denotes the norm map (in a general sense), and the isomorphism class of
 $\wh{H}^i(G,A)$ depends only on $i \mod 2$. 
 
Now the class field axiom for number fields is stated:

\begin{thm}[The axiom of class field theory for number fields \cite{Neukirch}]
Let $F/k$ be a cyclic extension of number fields with degree $n$, and $G=\Gal(F/k)$. Then 
the id\`ele class group $C_F$ satisfies 
$$\wh{H}^0(G,C_F)\cong \Z/n\Z,\ \wh{H}^{1}(G,C_F)=1.$$
\end{thm} 

In addition, we have 

\begin{prop}[\cite{Neukirch}] If  $F/k$ is an unramified cyclic extension, 
the unit id\`ele group $U_F$ satisfies $\wh{H}^i(G,U_F)=1$ for all $i\in \Z$.
\end{prop} 

In the case of number fields, the axiom ensures 
the Artin reciprocity law.  
Therefore, the following question is natural: 
\begin{q}[The axiom of class field theory for 3-manifolds] Let $M$ be an closed, oriented, connected 3-manifold equipped with a very admissible link $\mca{K}$, $h:N\to M$ a cyclic branched cover of degree $n$, and $G=\Gal(h)$. Do the following hold? 
(i) $\wh{H}^0(G,C_{N,h^{-1}(\mca{K})})\cong \Z/n\Z$, 
(ii) $\wh{H}^{1}(G,C_{N,h^{-1}(\mca{K})})=0$. 
\end{q} 

In the following, we will check that the axiom does not \red{necessarily} hold. 
Let $p$ be a prime number and suppose $n=p$. 
For each $G$-module $A$, we write $H^i(G,A)=H^i(A)$ for simplicity. 
For each $A = I, P$, and $C$, we use the abbreviation $A_{N,h^{-1}(\mca{K})}=A_N$.
The exact sequence $0\to P_N\to I_N\to C_N\to 0$ yields the long exact sequence of the Tate cohomologies. 

A natural map called the \emph{transfer} $h^!: C_*(M)\to C_*(N)$ is defined by 
taking a connected component $\wt{c}_1$ of the preimage of an open simplex $c$ and sending $c\mapsto \sum_{g\in G} g\wt{c}_1$. 
(It is also defined by using the language of Serre-fibration on the exterior of branch locus $L$, and by extending to whole $M$ so that it is compatible with the boundary map $\del:C_{i+1}\to C_i$.) 
This map induces the transfers on $Z_*$ and $B_*$, and hence on $H_*$.  
The local behavior of the transfer map is as follows: 
\begin{prop}\label{transfer}
Let $K$ be a knot in $\mca{K}$ and let $\wt{K}=h^{-1}(K)$. \\
(1) If $K$ is branched, then $h^!:H_1(\del V_K)\to H_1(\del V_\wt{K})$ maps $\mu_K\mapsto \mu_\wt{K}$, $\lambda_K\mapsto p\lambda_\wt{K}$. \\
(2) If $K$ is inert, then $h^!:H_1(\del V_K)\to H_1(\del V_\wt{K})$ maps $\mu_K\mapsto p\mu_\wt{K}$, $\lambda_K\mapsto \lambda_\wt{K}$. \\
(3) If $K$ is decomposed, say $\wt{K}=K_1\cup ... \cup K_p$, then  
$h^!:H_1(\del V_K)\to H_1(\del V_\wt{K})$ maps $\mu_K\mapsto \sum_i \mu_{K_i}$, $\lambda_K\mapsto \sum_i \lambda_{K_i}$. 
\end{prop}
The transfers are also induced on 
the id\`ele groups, the principal id\`ele groups, and the id\`ele class groups naturally,  and satisfies 
$\ds \Nr
=h^!\circ h_*$ on each of them. 

\begin{prop} 
The id\`ele group satisfies $\wh{H}^1(G,I_{N,h^{-1}(\mca{K})})=0$. 
\end{prop}

\begin{proof} 
Let $\mca{K}_{\rm BI}$ and $\mca{K}_{\rm D}$ denote 
the sublinks of $h^{-1}(\mca{K})$ which consist of the branched or inert components and the decomposed components respectively, and put $I_i:=\restprod_{K\subset \mca{K}_i}H_1(\del V_K)$ for $i={\rm BI, D}$. Then $I_N=I_{\rm BI}\oplus I_{\rm D}$ as $G$-modules. 

The former part $I_{\rm BI}$ is point-wise fixed by $G$, and $\Nr$ acts on it as multiplication by $p$. Since $I_{\rm BI}$ is torsion-free, 
$\Ker \Nr|_{I_{\rm BI}}=0$, and hence $\wh{H}^1(I_{\rm BI})=0$. 
The latter part satisfies $I_{\rm D}=(\text{meridians})\oplus(\text{longitudes})\cong \Z[G]^\N \oplus \Z[G]^{\oplus \N}$, and it  
is the inverse limit of a surjective system of free $\Z[G]$-modules. 
Since $G$ is finite, we obtain $\wh{H}^*(I_{\rm D})=0$. 
Therefore $\wh{H}^1(I_N)=0$. \end{proof}

\begin{conj} 
(ii) $\wh{H}^1(G,C_{N,h^{-1}(\mca{K})})=0$ holds in general. 
\end{conj}

Then, the exact sequence $\wh{H}^{-1}(I_N)\to \wh{H}^{-1}(C_N)\to \wh{H}^0(P_N)\to \wh{H}^0(I_N)$ yields 
$\wh{H}^1(C_N)\cong \wh{H}^{-1}(C_N)\cong \Ker(\wh{H}^0(P_N)\to \wh{H}^0(I_N)) =\Ker(P_N^G/\Nr(P_N)\to I_N^G/\Nr(I_N))$. 
If $P_N^G/\Nr(P_N)\to I_N^G/\Nr(I_N)$ is injective, then $\wh{H}^1(C_N)=0$ holds. 


Next, let 
$d_1$ and $d_2$ denote the numbers of branched components and inert components of $h^{-1}(\mca{K})$ respectively, where $d_2$ can be infinite, and let $\mca{K}_{\rm BI}$ be as in the proof above. 
\begin{prop} 
The id\`ele group satisfies $\wh{H}^0(G,I_{N,h^{-1}(\mca{K})})\cong 
\restprod_{K' \subset \mca{K}_{\rm BI}}(\Z/p\Z)^2$, 
and $I_{N,h^{-1}(\mca{K})}^G/h^!(I_{M,\mca{K}})\cong (\Z/p\Z)^{d_1+d_2}$. 
\red{Meanwhile}, (i) $\wh{H}^0(G,C_{N,h^{-1}(\mca{K})})\cong \Z/p\Z$ does not \red{necessarily} hold. 
\end{prop} 

\begin{proof} 
Recall the standard isomorphism $I_N=\restprod_{K'\subset h^{-1}(\mca{K})}H_1(\del V_{K'})\cong \restprod_{K'} \Z^2$ defined by the meridians and the fixed longitudes.  
Then $\wh{H}^{\red 0}(I_N)\cong \restprod_{K'\subset \mca{K}_{\rm BI}} (\Z/p\Z)^2$ is clear. 
By Proposition \ref{transfer}, $I_N^G/h^!(I_M)\cong (\Z/p\Z)^{d_1}\oplus (\Z/p\Z)^{d_2}$ 
is also clear, where $(\Z/p\Z)^{d_1}$ and $(\Z/p\Z)^{d_2}$ correspond to
the longitudes of the branched part and the meridians of the inert part respectively. 
Indeed, the transfer $h^!:I_M\to I_N^G$ is bijective on the decomposed part, the longitudes of the inert part and the meridians of the branched part. 

Meanwhile, the maps $I_N^G\to C_N^G$ and $h^!(I_M)\surj h^!(C_M)$ yield a natural map $I_N^G/h^!(I_M)\to C_N^G/h^!(C_M)$. 
In general, this map is clearly non-zero, and especially $C_N^G/h^!(C_M)=0$ does not hold; 
Take the meridian of an inert knot $K$ in $h^{-1}(\mca{K})$ 
which is non-trivial in $H_1(N\setminus K)$, for instance. 

Now we have an exact sequence 
$0\to C_M/h_*(C_N)\to C_N^G/h^!\circ h_*(C_N)\to C_N^G/h^!(C_M)\to 0$, 
an equation $\wh{H}^0(C_N)=C_N^G/h^!\circ h_*(C_N)$, and the isomorphism $G=\Gal(h)\cong C_M/h_*(C_N)$ by Theorem \ref{global CFT for mfd} (the global reciprocity law). 
Thus, $\wh{H}^0(C_N)$ differs from $G\cong \Z/p\Z$ by the term $C_N^G/h^!(C_M)$. 
\end{proof}

Here is another way of calculation: 
an isomorphism 
$C_{M,\mca{K}}\cong \varprojlim_{L\subset \mca{K}} H_1(M\setminus L)$ 
deduces the calculation of $\wh{H}^i(C_{N,h^{-1}(\mca{K})})$ to the case of a finite link $L$  
and the ordinary exact sequence $0\to B_1(X_L)\to Z_1(X_L)\to H_1(X_L)\to 0$. 

In addition, the unit id\`ele group satisfies the following: 

\begin{prop} The Tate cohomology of the unit id\`ele group counts the number of branched or 
 inert components: 
$\wh{H}^0(U_{N,h^{-1}(\mca{K})})\cong (\Z/p\Z)^{d_1+d_2}, 
\wh{H}^1(U_{N,h^{-1}(\mca{K})})=0$. 
Especially, if $h$ is unbranched, it counts the number $d_2$ of inert components. 
\end{prop} 

Thereby, we have checked the counter part of the axiom of class field theory. 
It is interesting that the main theorems of id\`elic class field theory hold nevertheless.

\end{document}